\documentclass[11pt,reqno]{amsart}

\usepackage{amsthm,amssymb,mathtools}
\usepackage[pagebackref,colorlinks,linkcolor=red,citecolor=blue,urlcolor=blue,hypertexnames=false]{hyperref}
\usepackage{amsrefs}
\usepackage{amscd}
\usepackage[utf8]{inputenc}
\usepackage{tikz}
\usetikzlibrary{matrix,arrows}
\usepackage[all]{xy}

\usepackage{enumerate}

\def\cC{\mathcal C} 
\def\cD{\mathcal D}

\def\fF{\mathfrak{F}}

\def\cF{\mathcal F}
\def\cE{\mathcal E}
\def\cT{\mathcal T}

\def\cO{\mathcal O}

\def\cS{\mathcal S}

\newcommand{\U}{\mathbb{U}}
\newcommand{\R}{\mathbb{R}}

\newcommand{\fin}{{\mathcal{F}in}}
\newcommand{\vcyc}{\mathcal{V}cyc}

\newcommand{\Z}{\mathbb{Z}}
\newcommand{\N}{\mathbb{N}}
\newcommand{\Dinf}{{D_\infty}}
\newcommand{\Ht}{H^{\langle t \rangle}}
\newcommand{\ltr}{{\langle t \rangle}}

\newcommand{\balpha}{\overline{\alpha}}
\newcommand{\balpham}{\overline{\alpha^{-1}}}
\newcommand{\balphapm}{\overline{\alpha^{\pm1}}}
\newcommand{\odd}{\mathrm{o}}
\newcommand{\even}{\mathrm{e}}
\renewcommand{\S}{S}

\newcommand{\So}{S_\odd}
\newcommand{\Se}{S_\even}
\newcommand{\Sop}{S_{\odd^+}}
\newcommand{\psioe}{\psi_{\mathrm{oe}}}
\newcommand{\psieo}{\psi_{\mathrm{eo}^+}}

\newcommand{\Hom}{\mathrm{Hom}}

\newcommand{\bfK}{\textbf{K}}
\newcommand{\pt}{\mathrm{pt}}
\newcommand{\OrDinf}{{\mathrm{Or}\Dinf}}
\newcommand{\Ch}{\mathrm{Ch}}
\newcommand{\Ab}{\mathrm{Ab}}
\newcommand{\op}{\mathrm{op}}

\newcommand{\OGR}{\widetilde{\cO}^{\langle t \rangle}(\R)}

\newcommand{\ODR}{\cO^{\Dinf}(\overline{\R})}

\def\supp{\operatorname{supp}}

\def\id{\operatornamewithlimits{id}}

\renewcommand{\Im}{\mathrm{Im}}
\newcommand{\assem}{\mathrm{assem}}

\newcommand{\size}{\mathrm{size}}
\newcommand{\hsi}{\mathrm{hsize}}
\newcommand{\vsi}{\mathrm{vsize}}
\renewcommand{\id}{\mathrm{id}}

\numberwithin{equation}{section}
\theoremstyle{plain}
\newtheorem{thm}[equation]{Theorem}

\newtheorem{cor}[equation]{Corollary}
\newtheorem{lem}[equation]{Lemma}

\newtheorem{prop}[equation]{Proposition}

\theoremstyle{definition}
\newtheorem{defn}[equation]{Definition}
\theoremstyle{remark}
\newtheorem{rem}[equation]{Remark}

\begin{document}
\title[Two examples of vanishing and squeezing in $K_1$]{Two examples of vanishing \\ and squeezing in $K_1$}
\author{E. Ellis}

\email{eellis@fing.edu.uy}
\address{IMERL. Facultad de Ingenier\'\i a. Universidad de la Rep\'ublica. Montevideo, Uruguay.}
\author{E. Rodr\'iguez Cirone}

\email{ercirone@dm.uba.ar}
\address{Dep. Matemática -- FCEyN -- UBA, Buenos Aires, Argentina.}
\author{G. Tartaglia}
\email{gtartaglia@mate.unlp.edu.ar}
\address{Dep. Matem\'atica-CMaLP, FCE-UNLP, La Plata, Argentina.}
\author{S. Vega}
\email{svega@dm.uba.ar}
\address{Dep. Matemática -- FCEyN -- UBA, IMAS -- CONICET , Buenos Aires, Argentina.}
\thanks{All authors were partially supported by grant ANII FCE-3-2018-1-148588. The first author is partially supported by ANII, CSIC and PEDECIBA. G. Tartaglia and S. Vega were supported by CONICET . The last three authors were partially supported by grants UBACYT 20020170100256BA and PICT 2017--1935}

\maketitle

\begin{abstract}
    
 Controlled topology is one of the main tools for proving the isomorphism conjecture concerning the 
    algebraic $K$-theory of group rings.
    In this article we dive into this machinery in two examples: when the group is infinite cyclic and when it is the infinite dihedral group --- in both cases with the family of finite subgroups. We prove a vanishing theorem and show how to explicitly squeeze the generators of these groups in $K_1$. For the infinite cyclic group, when taking coefficients in a regular ring, we get a squeezing result for every element of $K_1$; this follows from the well-known result of Bass, Heller and Swan.
\end{abstract}


\section{Introduction}
Let $G$ be a group, $\cF$ a family of subgroups of $G$, $R$ a ring and $\mathbf{K}$ the non-connective algebraic $K$-theory spectrum. The isomorphism conjecture for $(G,\cF,R,\mathbf{K})$ identifies the algebraic $K$-theory of the group ring $RG$ with an equivariant homology theory evaluated on $E_{\cF}G$, the universal $G$-$CW$-complex with isotropy in $\cF$. More precisely, the conjecture asserts that the following assembly map --- induced by the projection of $E_{\cF}G$ to the one-point space $G/G$ --- is an isomorphism \cite{dl}: 
\begin{equation}\label{as}
\text{assem}_{\cF}: H^G_*(E_{\cF}G,\textbf{K}(R)) \to H^G_*(G/G,\textbf{K}(R)) \cong K_*(RG)
\end{equation}
The left hand side of \eqref{as} provides homological tools which may facilitate the computation of the $K$-groups.
 
For $\cF=\vcyc$, the family of virtually cyclic subgroups, the conjecture is known as the Farrell-Jones conjecture \cite{fj},\cite{bfjr}. Although this conjecture is still open,
it is known to hold for a large class of groups, among which are hyperbolic groups \cite{blr}, CAT(0)-groups \cite{bl}, solvable groups \cite{w} and mapping class groups \cite{bb}.  One of the main methods of proof is based on controlled topology, and its key ingredient is an \emph{obstruction category} whose $K$-theory coincides with the homotopy fiber of the assembly map.

For a free $G$-space $X$, the objects of the obstruction category $\cO^G(X)$ are $G$-invariant families of finitely generated free $R$-modules $\{M_{(x,t)}\}_{(x,t)\in X\times[1,\infty)}$ whose support is a locally finite subspace of $X\times [1,\infty)$. A morphism in $\cO^G(X)$ is a $G$-invariant family of $R$-module homomorphisms satisfying the continuous control condition at infinity.
Associated to $\cO^G(X)$ there is a Karoubi filtration 
\[\mathcal{T}^{G}(X)\rightarrow \mathcal{O}^{G}(X)\rightarrow \mathcal{D}^{G}(X)\]
that induces a long exact sequence in $K$-theory:
\[
\small{\ldots} \rightarrow K_{*+1}( \mathcal{O}^{G}(X))\rightarrow  K_{*+1}( \mathcal{D}^{G}(X)) \xrightarrow{\partial} K_{*}( \mathcal{T}^{G}(X))\rightarrow K_{*}( \mathcal{O}^{G}(X))\rightarrow \small{\ldots}
\]
The previous definitions can be generalized for non-necessarily free $G$-spaces. Taking $X=E_{\cF}G$,
the assembly map \eqref{as} identifies with the connecting homomorphism $\partial$ of the above sequence.
Hence, an element $[\alpha] \in K_{*} (\mathcal{T}^{G}(X))  $ belongs to the image of the assembly map if and only if this element vanishes in $K_{*}( \mathcal{O}^{G}(X))$. 

If $X$ admits a $G$-invariant metric $d$, there is a notion of size for morphisms in $\cO^G(X)$. Given $\epsilon >0$, we say that $\varphi \in \cO^G(X)$ is \emph{$\epsilon$-controlled} over $X$ if $d(x,y)<\epsilon$, $\forall (x,t),(y,s)$ in the support of $\varphi$. If $\varphi$ is an $\epsilon$-controlled automorphism such that $\varphi^{-1}$ is also $\epsilon$-controlled, we call it an $\epsilon$-\emph{automorphism}.  The general strategy for proving that the obstruction category has trivial $K_1$ is the following: first show that there exists an $\epsilon>0$ such that $\epsilon$-automorphisms have trivial $K$-theory (\emph{vanishing result}), and then verify that every morphism has a representative in $K$-theory which is an $\epsilon$-automorphism (\emph{squeezing result}); see  \cite{ba}*{Corollary 4.3}, \cite{b}*{Theorem 2.10},  \cite{blr}, \cite{ped}*{Theorems 3.6 and 3.7}, \cite{rv}*{Theorem 37}.

In this article we examine how the previous machinery works in two examples: 
\begin{enumerate}[(i)]
    \item\label{ex:1} the infinite cyclic group $G=\langle t\rangle$ and the family $\cF$ consisting only of the trivial subgroup;
    \item\label{ex:2} the infinite dihedral group $G=\Dinf$ and the family $\cF=\fin$ of finite subgroups.
\end{enumerate}

In both cases, it is easily verified that $\R$ is a model for $E_\cF G$. In the first example, $t$ acts by translation by $1$. In the second one, we use the following presentation of the infinite dihedral group:
\begin{equation}\label{presDinf}\Dinf=\langle r, s\mid s^2=1, rs=sr^{-1}\rangle.\end{equation}
Then $r$ acts by translation by $1$ and $s$ acts by symmetry with respect to the origin. By the discussion above, in both examples, the assembly map in degree $1$ identifies with the morphism:

\begin{equation}\label{ass1}
\partial:K_2(\mathcal{D}^G(\mathbb{R}))\rightarrow K_1(\mathcal{T}^G(\mathbb{R})).
\end{equation}
Adapting ideas of Pedersen \cite{ped} to these $G$-equivariant settings, we prove the following vanishing result.
\begin{thm}[Theorem \ref{vani}]\label{intro:thmvani}
Let $G=\ltr$ or $G=\Dinf$. If $\alpha$ is a $\frac{1}{30}$-automorphism in $\mathcal{O}^G(\R)$, then $\alpha$ has trivial class in $K_1$.
\end{thm}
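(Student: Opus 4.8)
The plan is to adapt Pedersen's ``cut and swindle'' strategy to the equivariant setting. Write $\alpha$ for a $\frac{1}{30}$-automorphism of an object $M$ of $\mathcal{O}^{G}(\R)$; we must show that $[\alpha]=0$ in $K_1(\mathcal{O}^{G}(\R))$. This is not automatic: $\mathcal{O}^{G}(\R)$ fails to be flasque precisely near the two ends $\pm\infty$ of $\R$, and for $G=\ltr$ with non-regular coefficients this failure is visible through the Nil-summands of $K_*(R\ltr)$. The role of the hypothesis is that an automorphism of propagation as small as $\frac{1}{30}$ never reaches those ends, so it can be localised on bounded parts of $\R$, where the obstruction disappears. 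Accordingly the argument has two steps: localise $[\alpha]$ on a $G$-equivariant cover of $\R$ by bounded intervals, and observe that over a bounded interval the relevant continuously controlled category is flasque.

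Step 1 (localisation). Choose a $G$-invariant cover $\R=A\cup B$ in which $A$ and $B$ are each a union of pairwise disjoint bounded closed intervals and all overlaps $A\cap B$ have width bounded below by a fixed positive constant; for $G=\ltr$ this is two $\ltr$-orbits of suitable intervals, and for $G=\Dinf$ one centres the intervals symmetrically so that the reflection $s$ permutes them (an interval may then acquire a $\Z/2$ stabiliser). Since $\alpha$ has propagation $<\frac{1}{30}$, smaller than the width of the overlaps, a standard combing argument lets one rewrite $\alpha$ --- after stabilisation and a bounded number of elementary operations, hence without changing its class in $K_1$ --- as a product of automorphisms supported over $A$ and over $B$ respectively, each still $G$-equivariant and continuously controlled at infinity. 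Thus $[\alpha]$ lies in the image of
\[
K_1\bigl(\mathcal{O}^{G}(A)\bigr)\oplus K_1\bigl(\mathcal{O}^{G}(B)\bigr)\;\longrightarrow\;K_1\bigl(\mathcal{O}^{G}(\R)\bigr),
\]
where $\mathcal{O}^{G}(A)$ and $\mathcal{O}^{G}(B)$ are the full subcategories on objects supported over $A$, resp.\ over $B$. It is here, and only here, that the numerical value of the control enters; the explicit bound $\frac{1}{30}$ is pinned down by the geometry of the chosen cover and the number of elementary moves in the splitting.

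Step 2 (the local categories are flasque). As the intervals making up $A$ are pairwise disjoint with gaps larger than $\frac{1}{30}$, the category $\mathcal{O}^{G}(A)$ is $G$-induced from the continuously controlled category $\mathcal{O}(I\times[1,\infty))$ over a single bounded interval $I$ (for $G=\Dinf$, possibly its $\Z/2$-equivariant version), and likewise for $B$. Over a bounded $I$ there is no boundary at infinity in the $\R$-direction, so the ``shift towards $t=\infty$'' endofunctor $\sigma$, with $(\sigma N)_{(x,t)}=N_{(x,t-1)}$ for $t\ge 2$ and $0$ for $t<2$, is continuously controlled and is naturally isomorphic to the identity: the comparison map has propagation $1$ in the $t$-direction only, which is invisible at $I\times\{\infty\}$ --- this is exactly what fails at $\{\pm\infty\}$ over all of $\R$. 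Hence $F=\bigoplus_{n\ge 0}\sigma^{n}$ is a well-defined locally finite endofunctor with $FN\cong N\oplus FN$, an Eilenberg swindle, and it commutes with the $\Z/2$-reflection so it is available equivariantly too. Therefore $K_*$ of $\mathcal{O}(I\times[1,\infty))$ vanishes, and so do $K_*(\mathcal{O}^{G}(A))$ and $K_*(\mathcal{O}^{G}(B))$. The domain of the map in Step 1 is then $0$, whence $[\alpha]=0$ in $K_1(\mathcal{O}^{G}(\R))$, for both $G=\ltr$ and $G=\Dinf$.

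The hard part is Step 1: one must carry out the combing argument while simultaneously respecting $G$-equivariance and the continuous control condition, and extract the explicit constant $\frac{1}{30}$ --- any sufficiently small $\epsilon>0$ would do for the qualitative statement. Step 2 and the final assembly are essentially formal once the bookkeeping of Step 1 is in place.
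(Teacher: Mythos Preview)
Your two-step outline --- localise to bounded pieces, then swindle --- is the right shape, and your Step~2 is essentially correct: over a single bounded interval $I$ the category $\mathcal{O}(I)$ is flasque (the paper proves this as Lemma~\ref{lem:K1vanishing}, using a squeeze-and-raise swindle rather than your pure vertical shift, but either works for that lemma). The gap is entirely in Step~1, and it is a gap of content, not of detail.

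You invoke a ``standard combing argument'' to factor $[\alpha]$ through $K_1(\mathcal{O}^G(A))\oplus K_1(\mathcal{O}^G(B))$. No such off-the-shelf argument does this: the truncation of $\alpha$ to $A$ is not an automorphism --- cutting destroys invertibility near $\partial A$ --- and repairing that is precisely the hard part. Note too that if your factorisation held for arbitrary $\alpha$, it would force $K_1(\mathcal{O}^G(\R))=0$ unconditionally, which is false for non-regular $R$; so the smallness hypothesis must enter the construction in a substantive way, not merely to ensure the pieces land in the right subcategories. Your phrase ``after stabilisation and a bounded number of elementary operations'' also undersells what is needed: finite stabilisation cannot suffice here.

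The paper does something structurally different. Rather than splitting over two covers, it shows (Theorem~\ref{chico2}) that $[\alpha]=[\beta]$ for an automorphism $\beta$ that restricts to a \emph{single} $G$-orbit of intervals, after which Lemma~\ref{lem:K1vanishing} applies directly. The construction stabilises by the \emph{infinite} squeezed sum $S(A)=\bigoplus_{n\geq 1}S_n(A)$ and sets $\beta=\eta\circ(\alpha\oplus\mu)$, where $\eta$ and $\mu$ are explicit products of six elementary matrices each, built from the truncations $\balpha,\balpham$ and the layer-shift isomorphisms $\psioe,\psieo$. The mechanism is that although $\balpha$ is not invertible, $\balpha\circ\balpham$ and $\balpham\circ\balpha$ are the identity on the middle third $[\tfrac13,\tfrac23]$ of each unit interval once $\hsi(\alpha^{\pm1})$ is small; a direct computation then shows that every ``error term'' in $\beta$ --- combinations such as $\id-S_j(\balpha\balpham)$ or $S_j(\balpha)-S_j(\balpha\balpham\balpha)$ --- vanishes on a shrinking family of central strips, so that $\beta$ is the identity there and hence restricts to the complementary interval $(\tfrac12,\tfrac32)$. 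The bound $\tfrac{1}{30}$ emerges because these error terms are compositions of up to five factors $S_j(\balphapm)$, each of horizontal size at most $\tfrac{1}{j}\cdot\hsi(\alpha)$ at layer $j$, and one needs $5\cdot\tfrac{1}{30n}<\tfrac{1}{6n}$ to keep the composite inside the strip at every height. None of this is combing in any standard sense; it is an explicit equivariant adaptation of Pedersen's argument, and it is exactly what your Step~1 would have to supply.
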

\noindent As an application, we get a sufficient condition for an element of $K_1(\mathcal{T}^G(\R))$ to be in the image of $\partial$.
\begin{cor}
Let $G=\ltr$ or $G=\Dinf$. If $\alpha$ is an $\frac{1}{30}$-automorphism in $\mathcal{T}^G(\R)$,  then $[\alpha]\in K_1(\mathcal{T}^G(\R))$ is in the image of the assembly map \eqref{ass1}.
\end{cor}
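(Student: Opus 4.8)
The plan is to deduce this directly from the vanishing Theorem~\ref{intro:thmvani} together with exactness of the long exact sequence associated to the Karoubi filtration $\cT^G(\R)\to\cO^G(\R)\to\cD^G(\R)$. The relevant portion of that sequence reads
\[
K_2(\cD^G(\R))\xrightarrow{\;\partial\;}K_1(\cT^G(\R))\xrightarrow{\;\iota_*\;}K_1(\cO^G(\R)),
\]
where $\iota\colon\cT^G(\R)\hookrightarrow\cO^G(\R)$ is the inclusion and $\partial$ is, by the discussion in the introduction, the assembly map \eqref{ass1}. By exactness, $\mathrm{Im}\,\partial=\ker\iota_*$, so it suffices to prove that $\iota_*[\alpha]=0$ in $K_1(\cO^G(\R))$.

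First I would note that the notion of size of a morphism is defined purely in terms of the metric $d$ on $\R$ and the supports of the modules involved, and that $\cT^G(\R)$ is a subcategory of $\cO^G(\R)$ on which this notion is inherited. Consequently, being a $\frac1{30}$-automorphism is an absolute condition on a morphism, independent of whether we regard it in $\cT^G(\R)$ or in $\cO^G(\R)$: if $\alpha$ is a $\frac1{30}$-automorphism in $\cT^G(\R)$, then $\iota(\alpha)$ is a $\frac1{30}$-automorphism in $\cO^G(\R)$, and $\iota_*[\alpha]=[\iota(\alpha)]$ in $K_1(\cO^G(\R))$.

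Then I would apply Theorem~\ref{intro:thmvani} to the $\frac1{30}$-automorphism $\iota(\alpha)$ in $\cO^G(\R)$, obtaining $[\iota(\alpha)]=0$, hence $\iota_*[\alpha]=0$. Therefore $[\alpha]\in\ker\iota_*=\mathrm{Im}\,\partial$, which is precisely the statement that $[\alpha]$ lies in the image of the assembly map \eqref{ass1}.

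I do not expect any genuine obstacle here: the entire mathematical content has been absorbed into Theorem~\ref{intro:thmvani}, and the corollary is a formal consequence of it via exactness of the Karoubi sequence. The only point that deserves (minor) care is the compatibility of the size/controlled-morphism terminology with the functor $\iota$, which is immediate from the definitions recalled above.
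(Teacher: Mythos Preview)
Your proof is correct and follows exactly the argument the paper has in mind: the corollary is an immediate application of the vanishing theorem via exactness of the Karoubi sequence, using that $\size(\varphi)=\hsi(\iota\varphi)$ when $\cT^G(\R)$ is viewed as a subcategory of $\cO^G(\R)$ (Definition~\ref{def sz}). The paper does not spell out a separate proof, treating it as a direct consequence of Theorem~\ref{intro:thmvani}.
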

\noindent This illustrates the idea that \emph{small} automorphisms in $K_{1}(\mathcal{T}^G(\R))$ should belong to the image of $\partial$.

Let us now take a closer look at the image of \eqref{ass1}; we will focus on example \eqref{ex:1}.

A well-known theorem of Bass-Heller-Swan computes, for any ring $R$, the algebraic $K$-theory of the Laurent polynomial ring $R[t,t^{-1}]$ in terms of the $K$-theory of $R$. The group $K_0(R)\oplus K_1(R)$ is always a direct summand of $K_1(R[t,t^{-1}])$, and its inclusion is given by the following formula (see \cite{bhs}):
\[\xymatrix{
[M] \oplus ([M'],\tau)\ar@{|->}[r]^-{\psi}  & [R[t,t^{-1}]\otimes_{R} M,  t\otimes \operatorname{id}] + [R[t,t^{-1}]\otimes_{R} M', \operatorname{id}\otimes \tau]}.
\]
It can be shown that $\psi$ and the assembly map \eqref{ass1} fit into a commutative square as follows:
\begin{equation}\label{cuadradito}
\xymatrix@R=1.5em{
 K_{2}(\mathcal{D}^{\ltr}(\mathbb{R}))\ar[d]_{\cong}\ar[r]^-{\partial}& K_{1}(\mathcal{T}^{\ltr}(\mathbb{R}))\ar[d]^{\cong}_{\mathbb{U}}
\\
K_{0}(R)\oplus K_{1}(R)\ar[r]^{\psi} & K_{1}(R[t^{-1},t]).}
\end{equation}
Notice that $\partial$ takes into account the geometry of $\R$ while $\psi$ is purely algebraic. The isomorphism $\mathbb{U}$ is, moreover, induced by the functor that forgets geometry. Thus, we can regard the Bass-Heller-Swan morphism $\psi$ as the algebraic shadow of the assembly map.

It is clear from the formula above that $t$ belongs to the image of $\psi$. However, the obvious representation of $t$ as an automorphism in $\mathcal{T}^\ltr(\R)$ is not small --- indeed, it has size $1$. This phenomenon was already mentioned in \cite{b}*{Remark 2.14}. In our context we prove a squeezing result for $t$.
\begin{prop}[Proposition \ref{tchiq}]\label{introtchiq}
Let $n\in \N$. Then there exists an $\frac{1}{n}$-automorphism $\xi$ in $\mathcal{T}^\ltr(\R)$ such that $\mathbb{U}([\xi])=[t]$ in $K_1(R[t,t^{-1}])$.
\end{prop}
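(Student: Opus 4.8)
The plan is to realise the ``obvious'' size-$1$ representative of $[t]$, namely multiplication by $t$ on the rank-one free module, as a composite of $N$ shifts of length $\tfrac1N$ after subdividing the underlying object; I would take $N$ to be an \emph{odd} integer with $N>n$, say $N=2n+1$.

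First I would introduce the object $P$ of $\mathcal{T}^{\ltr}(\R)$ with $P_{(x,s)}=R$ when $x\in\tfrac1N\Z$ and $s=1$, and $P_{(x,s)}=0$ otherwise: its support $\tfrac1N\Z\times\{1\}$ is locally finite and lies in $\R\times\{1\}$, and $P$ is $\ltr$-invariant because $t$ translates by $1$, so this is a legitimate object. Then I would let $\xi\colon P\to P$ be the ``shift by $\tfrac1N$'', whose only nonzero components are the identity maps $P_{(x,1)}\to P_{(x+\frac1N,1)}$ for $x\in\tfrac1N\Z$. This morphism is $\ltr$-invariant; its support is contained in $\R\times\{1\}$, so the continuous control condition at infinity holds automatically; and it is invertible with inverse the shift by $-\tfrac1N$. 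Since $\xi^{\pm1}$ only connect points at distance $\tfrac1N<\tfrac1n$, it is an $\tfrac1n$-automorphism in $\mathcal{T}^{\ltr}(\R)$.

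The rest is to compute $\mathbb{U}([\xi])$. As $\mathbb{U}$ forgets the geometry, it sends $P$ to $\bigoplus_{x\in\frac1N\Z}P_{(x,1)}$ with its natural module structure over the group ring $R[t,t^{-1}]$; taking $0,\tfrac1N,\dots,\tfrac{N-1}N$ as orbit representatives identifies this with $R[t,t^{-1}]^{N}$ on a basis $e_{0},\dots,e_{(N-1)/N}$, and $\mathbb{U}(\xi)$ then becomes the matrix $\sigma$ given by $\sigma(e_{k/N})=e_{(k+1)/N}$ for $0\le k\le N-2$ and $\sigma(e_{(N-1)/N})=t\,e_{0}$, i.e.\ $1$'s on the subdiagonal and $t$ in the upper right corner. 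I would factor $\sigma=\diag(t,1,\dots,1)\cdot\Pi$ with $\Pi$ the permutation matrix of the $N$-cycle, so that $[\mathbb{U}(\xi)]=[t]+[\Pi]$ in $K_{1}(R[t,t^{-1}])$, and then show $[\Pi]=0$: the $N$-cycle is a product of $N-1$ transpositions, so $\Pi$ is a product of $N-1$ transposition matrices, each of class $c:=[\diag(-1,1,\dots,1)]$; and $2c=[\diag(-1,-1)]=0$ in $K_{1}$ because $\diag(-1,-1)$ is a product of elementary matrices (for instance $\diag(-1,-1)=\left(\begin{smallmatrix}0&-1\\1&0\end{smallmatrix}\right)^{2}$ with $\left(\begin{smallmatrix}0&-1\\1&0\end{smallmatrix}\right)\in E_{2}(R)$). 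Since $N-1$ is even, $[\Pi]=(N-1)c=0$, whence $\mathbb{U}([\xi])=[t]$, as desired.

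I expect the one genuinely non-formal step to be this last $K_{1}$-bookkeeping: identifying the matrix $\sigma$ correctly and, above all, controlling the contribution of the cyclic permutation. Taking $N$ odd disposes of it cleanly; for general $N>n$ one would instead replace $\xi$ by $\xi\oplus(-\id)^{\oplus(N-1)}$, the $(-\id)$'s living on an auxiliary object supported on $\R\times\{1\}$, which is a direct sum of $0$-automorphisms and so leaves the size unchanged. I would also verify the orientation convention relating the $\ltr$-action to the $R[t,t^{-1}]$-module structure, so that the corner entry of $\sigma$ is $t$ and not $t^{-1}$; if it is the latter one simply uses the shift by $-\tfrac1N$.
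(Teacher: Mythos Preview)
Your proof is correct and takes essentially the same approach as the paper: both build the geometric module supported on $\tfrac1N\Z$ and let $\xi$ be the shift by $\tfrac1N$, so that $\mathbb{U}(\xi)$ factors as $\diag(t,1,\dots,1)$ times a cyclic permutation. The only cosmetic difference is in how the permutation's $K_1$-class is disposed of: the paper inserts a sign $(-1)^{n+1}$ into one subdiagonal entry of $\xi$ (which does not affect the size) so that the residual factor has trivial class for \emph{every} $n$, whereas you take $N$ odd so that the $N$-cycle is already an even permutation --- both tricks amount to the same Whitehead-lemma bookkeeping.
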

\noindent If we further assume that $R$ is regular, the latter result and the proof of 
\cite{bhs}*{Theorem 2} imply the following.
\begin{prop}[Proposition \ref{Achic}]
Let $R$ be a regular ring and let $\varepsilon>0$. For every $x\in K_{1}(R[t,t^{-1}])$ there exists an  $\epsilon$-automorphism $\xi$ in $\mathcal{T}^\ltr(\R)$ such that $\mathbb{U}([\xi])=x$.
\end{prop}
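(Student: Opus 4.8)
The plan is to put an arbitrary class of $K_1(R[t,t^{-1}])$ into the Bass--Heller--Swan normal form and then to squeeze each piece of that form separately, generalizing the construction behind Proposition~\ref{introtchiq}.

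Since $R$ is regular we have $K_1(R[t])\cong K_1(R)$, so the Nil groups $NK_1(R)$ vanish and the Bass--Heller--Swan map $\psi$ appearing in~\eqref{cuadradito} is an isomorphism (this is \cite{bhs}*{Theorem 2}). Hence any $x\in K_1(R[t,t^{-1}])$ can be written as
\[
x\;=\;\psi\bigl([M]\oplus([M'],\tau)\bigr)
\;=\;[R[t,t^{-1}]\otimes_R M,\,t\otimes\id]
\;+\;[R[t,t^{-1}]\otimes_R M',\,\id\otimes\tau],
\]
where $M$ is a finitely generated projective $R$-module and, since every class of $K_1(R)$ is represented on a finitely generated free module, $M'=R^k$ and $\tau\in\mathrm{GL}_k(R)$. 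The functor $\mathbb{U}$ is additive and a block sum of two $\varepsilon$-automorphisms is again an $\varepsilon$-automorphism, so it is enough to exhibit, for each of the two summands above, an $\varepsilon$-automorphism $\xi_i$ in $\mathcal{T}^{\ltr}(\R)$ whose image under $\mathbb{U}$ is that summand; then $\xi=\xi_1\oplus\xi_2$ does the job.

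The summand coming from $\tau$ is handled by a size-$0$ automorphism: take the object of $\mathcal{T}^{\ltr}(\R)$ given by a copy of $R^k$ at each point of the orbit $\Z\times\{1\}\subseteq\R\times[1,\infty)$, together with the fiberwise automorphism $\tau$; forgetting geometry sends this to $[R[t,t^{-1}]\otimes_R M',\,\id\otimes\tau]$. For the summand coming from $M$ we mirror the proof of Proposition~\ref{introtchiq}. Write $M=eR^m$ for an idempotent $e\in M_m(R)$, fix an \emph{odd} integer $N$ with $1/N<\varepsilon$, and let $P$ be the object of $\mathcal{T}^{\ltr}(\R)$ consisting of a copy of $R^m$ at each point of the lattice $\tfrac1N\Z\times\{1\}$. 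Let $\beta\in\Aut_{\mathcal{O}^{\ltr}(\R)}(P)$ be the ``shift by $\tfrac1N$'' automorphism and set $\xi_1=e\beta+(1-e)$, with $e$ the constant fiberwise idempotent. Since $e$ commutes with $\beta$, the morphism $\xi_1$ is invertible with inverse $e\beta^{-1}+(1-e)$, and both $\xi_1$ and $\xi_1^{-1}$ have size $1/N<\varepsilon$. Under $\mathbb{U}$ the object $P$ becomes the free module $R[t,t^{-1}]^{mN}$, which decomposes as $\overline e\,\mathbb{U}(P)\oplus(1-\overline e)\,\mathbb{U}(P)$ compatibly with $\mathbb{U}(\xi_1)$; on the second summand $\mathbb{U}(\xi_1)$ is the identity, and on the first it is the ``cyclic shift with one factor $t$'' acting on $\bigoplus_{j=0}^{N-1}\bigl(M\otimes_R R[t,t^{-1}]\bigr)$. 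As in the proof of Proposition~\ref{introtchiq}, the $K_1$-class of this cyclic shift equals $[R[t,t^{-1}]\otimes_R M,\,t\otimes\id]$ plus $(N-1)$ times the class of $-\id$ on $M\otimes_R R[t,t^{-1}]$, and the latter vanishes since $N$ is odd and $[-\id]$ is $2$-torsion. Therefore $\mathbb{U}([\xi_1])=[R[t,t^{-1}]\otimes_R M,\,t\otimes\id]$ and $\mathbb{U}([\xi])=x$.

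The main obstacle is this last step: checking that the ``partial shift'' $\xi_1=e\beta+(1-e)$ is a bona fide $\varepsilon$-automorphism in $\mathcal{T}^{\ltr}(\R)$ (equivariance, local finiteness, the continuous control condition, and the size of $\xi_1^{\pm1}$), and computing its image under the forgetful functor $\mathbb{U}$ carefully enough to run the $K_1$-identification --- in particular dealing with the cyclic-permutation term, which is the reason for choosing $N$ odd. This is bookkeeping rather than anything conceptually hard; once $\psi$ is known to be surjective, everything else is formal.
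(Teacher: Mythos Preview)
Your argument is correct and follows the same overall strategy as the paper: invoke Bass--Heller--Swan to split $x$ into a $K_0(R)$ piece and a $K_1(R)$ piece, then represent each piece by a small automorphism. The differences are in the details of each piece. For the $K_1(R)$ summand the paper uses Remark~\ref{remgl} (stabilising a matrix $M\in\mathrm{GL}_m(R)$ by identity blocks shrinks its size), whereas you simply place $\tau$ fiberwise and get size~$0$; your version is cleaner. For the $K_0(R)$ summand the paper cites \cite{bhs} to write $x=[t^n]+[M]$ and then applies Proposition~\ref{tchiq} directly to $[t^n]$, i.e.\ it only explicitly treats the free class $n[R]\in K_0(R)$. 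You instead treat an arbitrary projective $[M]=[eR^m]$ by the idempotent-twisted shift $e\beta+(1-e)$; this is exactly the construction of Proposition~\ref{tchiq} with $R$ replaced by $R^m$ and then cut down by $e$, so it is the natural generalisation and makes the argument self-contained for rings with $\widetilde K_0(R)\neq 0$. The sign issue that you resolve by taking $N$ odd is handled in the paper's Proposition~\ref{tchiq} by inserting the factor $(-1)^{n+1}$ in one entry of the cyclic matrix; either device works. Two cosmetic points: your object for the $K_1(R)$ piece should live over $\Z\subset\R$ in $\widetilde{\mathcal T}^{\ltr}(\R)$ rather than over $\Z\times\{1\}$ (you are implicitly identifying $\mathcal T$ with the height-$1$ slice of $\mathcal O$), and $\beta$ and $\xi_1$ are automorphisms in $\widetilde{\mathcal T}^{\ltr}(\R)$, not $\mathcal O^{\ltr}(\R)$.
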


In example \eqref{ex:2}, it can be shown that both $r$ and $s$ belong to the image of the assembly map, and one may try to represent these elements by $\varepsilon$-automorphisms, for small $\varepsilon>0$. In the case of $r$, the proof of Proposition \ref{introtchiq} carries on verbatim to show that, for every $n\in\N$, there is an $\frac1n$-automorphism $\xi$ in $\mathcal{T}^\Dinf(\R)$ such that $\mathbb{U}([\xi])=[r]$. In the case of $s$, it is possible to find a $0$-automorphism representing this element (Remark. \ref{rem:s-chico}).

The rest of the paper is organized as follows. In section \ref{sec:generalset} we mainly fix notation and recall from \cite{bfjr} the basic definitions and results from controlled topology. In section \ref{sec:twoex} we study algebraically the assembly maps in the two examples mentioned above. In the case of example \eqref{ex:1}, we use Mayer-Vietoris to identify the domain of the assembly map \eqref{ass1} with $K_0(R)\oplus K_1(R)$. In the case of example \eqref{ex:2}, we use the equivariant Atiyah-Hirzebruch spectral sequence and \cite{ranicki}*{Corollary 3.27} to show that the assembly map is an isomorphism for regular $R$. Section \ref{sec:vanishing} contains the proof of Theorem \ref{intro:thmvani}. In section \ref{sec:squeezing} we discuss the notion of size in terms of matrices and we prove Proposition \ref{introtchiq}.

\subsection*{\it Acknowledgements} The authors wish to thank the organizers of the workshop {\it Matemáticas en el Cono Sur}, where this project was initiated, and Holger Reich, for his helpful comments. The last three authors also thank Eugenia Ellis for her hospitality and support during their visits to the IMERL--UdelaR in Montevideo.

\section{General setting}\label{sec:generalset}

\subsection{Geometric modules}
Let $R$ be a unital ring and $X$ a space. The additive category $\cC(X)=\cC(X;R)$ of geometric $R$-modules over $X$ is defined as follows. An object is a collection $A=(A_x)_{x\in X}$ of finitely generated free $R$-modules whose support $\supp(A)=\left\{x\in X : A_x\neq 0\right\}$ is locally finite in $X$. Recall that a subset $S\subset X$ is \emph{locally finite} if each point of $X$ has an open neighborhood whose intersection with $S$ is a finite set. A morphism $\varphi:A=(A_x)_{x\in X} \to B=(B_y)_{y\in X}$ consists of a collection of morphisms of $R$-modules $\varphi_x^y:A_x\to B_y$ such that the set $\left\{x : \varphi_x^y\neq 0\right\}$ is finite for every $y\in X$ and the set $\left\{y : \varphi_x^y\neq 0\right\}$ is finite for every $x\in X$. The support of $\varphi$ is the set
\[
\supp(\varphi)=\left\{(x,y)\in X\times X : \varphi_x^y\neq 0\right\}.
\]
Composition is given by matrix multiplication.

Let $G$ be a group which acts on $X$. Then there is an induced action on $\cC(X)$ given by $(g^*A)_x= A_{gx}$ and $(g^*\varphi)_x^y=\varphi_{gx}^{gy}$. A geometric $R$-module $(A_x)_{x\in X}$ is called $G$-invariant if $g^*A=A$. A morphism $\varphi$ between $G$-invariant geometric $R$-modules is called $G$-invariant if $g^*\varphi=\varphi$. The category of $G$-invariant geometric $R$-modules and $G$-invariant morphisms is denoted $\cC^G(X)$. It is an additive subcategory of $\cC(X)$. 

\subsection{Restriction to subspaces}

Let $A$ be a geometric $R$-module on $X$ and let $Y\subseteq X$ be a subspace. We will write $A|_Y$ for the geometric module over $X$ defined by
\[\left(A|_Y\right)_x = \left\{\begin{array}{cl}A_x & \mbox{if $x\in Y$,}\\ 0 & \mbox{otherwise.}\end{array}\right.\]
Notice that $A|_Y$ is a submodule of $A$.

\begin{rem}
If $Y=\cup_iY_i$ is a disjoint union, then $A|_Y=\oplus_iA|_{Y_i}$.
\end{rem}

Let $A$ and $B$ be geometric modules over $X$, let $Y,Z\subseteq X$ be subspaces, and let $\alpha:A\to B$ be a morphism. The decompositions $A=A|_Y\oplus A|_{Y^c}$ and $B=B|_Z\oplus B|_{Z^c}$ induce a matrix representation
\[\alpha=\left(\begin{array}{cc}
    \alpha|_Y^Z & \alpha|_{Y^c}^Z \\[0.5em]
    \alpha|_Y^{Z^c} & \alpha|_{Y^c}^{Z^c}
\end{array}\right)\]
where $\alpha|_V^W:A|_V\to B|_W$, $V\in\{Y, Y^c\}$, $W\in\{Z, Z^c\}$. This gives a well defined function:
\[?|_Y^Z:\Hom_{\cC(X)}(A,B)\to \Hom_{\cC(X)}(A|_Y, B|_Z)\]
The following properties are easily verified:
\begin{enumerate}
    \item If $\alpha, \beta:A\to B$, then $(\alpha+\beta)|_Y^Z=\alpha|_Y^Z+\beta|_Y^Z.$
    \item If $Y=Y_1\cup\cdots \cup Y_m$ and $Z=Z_1\cup \cdots\cup Z_n$ are disjoint unions, then the decompositions $A|_Y=A|_{Y_1}\oplus\cdots\oplus A|_{Y_m}$ and $B|_Z=B|_{Z_1}\oplus\cdots\oplus B|_{Z_n}$ induce the matrix representation:
    \[\alpha|_Y^Z=\begin{pmatrix}\alpha|_{Y_1}^{Z_1} & \cdots & \alpha|_{Y_m}^{Z_1}\\
    \vdots & \ddots & \vdots \\
    \alpha|_{Y_1}^{Z_n} & \cdots & \alpha|_{Y_m}^{Z_n}\end{pmatrix}\]
    \item If $\alpha:A\to B$, $\beta:B\to C$ and $X=X_1\cup \cdots \cup X_n$ is a disjoint union, then:
    \[\left.\left(\beta\circ\alpha\right)\right|_Y^Z=\sum_{i=1}^n\beta|_{X_i}^Z\circ \alpha|^{X_i}_Y\]
\end{enumerate}

\begin{rem}
The above definitions make sense in the equivariant setting. If $X$ is a $G$-space, $Y\subseteq X$ is a $G$-invariant subspace and $A$ is a $G$-invariant geometric module, then $A|_Y$ is $G$-invariant as well. If $\alpha$ is a $G$-invariant morphism and $Y,Z\subseteq X$ are $G$-invariant subspaces, then $\alpha|_Y^Z$ is $G$-invariant as well.
\end{rem}

\begin{defn}
Let $\gamma:A\to B$ be a morphism of geometric modules and let $Y\subseteq X$ be a subspace. We say that $\gamma$ \emph{is zero on $Y$} if the decompositions $A=A|_Y\oplus A|_{Y^c}$ and $B=B|_Y\oplus B|_{Y^c}$ induce a matrix representation:
\[\gamma=\begin{pmatrix}0 & 0 \\ 0 & \ast\end{pmatrix}\]
\end{defn}

\begin{defn}
Let $A$ be a geometric module on $X$, let $\gamma:A\to A$ be an endomorphism and let $Y\subseteq X$ be a subspace. We say that $\gamma$ \emph{is the identity on $Y$} if the decomposition $A=A|_Y\oplus A|_{Y^c}$ induces a matrix representation:
\[\gamma=\begin{pmatrix}\id & 0 \\ 0 & \ast\end{pmatrix}\]
\end{defn}

\begin{rem}
It is easily verified that if $\gamma$ is the identity on $Y$ (respectively, zero on $Y$) and $Z\subseteq Y$, then $\gamma$ is the identity on $Z$ (resp. zero on $Z$).
\end{rem}

\begin{rem}
Let $\alpha:A\to B$ and $\beta:B\to C$ be morphisms of geometric modules on $X$ and let $Y,Z\subseteq X$ be subspaces. If $\alpha$ is the identity on $Y$, then $(\beta\circ\alpha)|_Y^Z=\beta|_Y^Z$. Indeed,
\begin{align*}
    (\beta\circ\alpha)|_Y^Z&=\beta|^Z_Y\circ \alpha|_Y^Y+\beta|_{Y^c}^Z\circ\alpha|_Y^{Y^c}\\
    &=\beta|_Y^Z\circ \id + \beta|_{Y^c}^Z\circ 0=\beta|_Y^Z.
\end{align*}
In the same vein, if $\beta$ is the identity on $Z$, then $(\beta\circ\alpha)|_Y^Z=\alpha|_Y^Z$. Also, if $\alpha$ is zero on $Y$ or $\beta$ is zero on $Z$ then $(\beta\circ\alpha)|_Y^Z=0$. We will use these properties in Section \ref{sec:vanishing} without further mention.
\end{rem}

\subsection{Control conditions.}  Let $X$ be a $G$-space and equip $X\times [1,\infty)$ with the diagonal action, where $G$ acts trivially on $[1,\infty)$. We need to impose some support conditions on objects and morphisms of $\cC^G(X)$ and $\cC^G(X\times[1,\infty))$
\smallskip

\emph{Object support condition} Let $\cS_{Gc}^X$ be the set of $G$-compact subsets of $X$; i.e. the set of all subsets of the form $GK$, with $K\subset X$ compact.

\emph{Morphism support condition} Let $G_x$ be the stabilizer subgroup of $x\in X$, and write $\cE_{Gcc}^X$ for the collection of all subsets $E\subset (X\times [1,\infty))\times (X\times [1,\infty))$ satisfying:

\begin{enumerate}
\item For every $x\in X$ and for every $G_x$-invariant open neighborhood $U$ of $(x,\infty)$ in $X\times [1,\infty]$, there exists a $G_x$-invariant  open neighborhood $V\subset U$ of $(x,\infty)$ in $X\times [1,\infty]$ such that
\[ ((X\times[1,\infty] - U)\times V)\cap E=\emptyset;
\]
\item the projection $(X\times[1,\infty))^{\times 2}\to [1,\infty)^{\times 2}$ sends $E$ into a subset of the form $\{(t,t')\in[1,\infty)\times[1,\infty):|t-t'|\leq \delta\}$ for some $\delta<\infty$;
\item $E$ is symmetric and invariant under the diagonal $G$-action.
\end{enumerate}

The collection $\cE_{Gcc}^X$ is called the \emph {equivariant continuous control condition}.

\begin{lem}(c.f. \cite{bfjr}*{Lemma 2.10}) \label{lem equiv}If $X$ is a free $G$-space, then $\cC^G(X;\cS^X_{Gc})$ is equivalent to the category $\fF_{RG}$ of finitely generated free $RG$-modules. 

\end{lem}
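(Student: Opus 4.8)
The plan is to construct an explicit equivalence by choosing a set of representatives for the $G$-action on $X$ and using $G$-invariance to reconstruct a geometric module from its values on those representatives. First I would fix, for each $G$-orbit in $X$, a point in that orbit; since $X$ is free, every point of $X$ can be written uniquely as $gx_0$ with $g\in G$ and $x_0$ a chosen representative. Given a $G$-invariant geometric module $A=(A_x)_{x\in X}$ with $G$-compact support, the support meets the chosen representatives in a finite set (local finiteness together with $G$-compactness forces the orbit space of $\supp(A)$ to be finite), so $\bigoplus_{x_0} A_{x_0}$ is a finitely generated free $R$-module. I would send $A$ to the $RG$-module $\bigoplus_{x_0} RG\otimes_R A_{x_0}$, which is finitely generated free over $RG$; the $G$-invariance $(g^*A)_x = A_{gx} = A_x$ is exactly what lets the family be recovered from the representatives, since $A_{gx_0}\cong A_{x_0}$ canonically. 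On morphisms, a $G$-invariant $\varphi$ with $\varphi_x^y = \varphi_{gx}^{gy}$ is determined by the components $\varphi_{x_0}^{gy_0}$ for representatives $x_0,y_0$ and $g\in G$; packaging $\sum_g \varphi_{x_0}^{gy_0}\cdot g$ gives a matrix entry in $RG$, and the finiteness conditions in the definition of morphisms in $\cC(X)$ guarantee this sum is finite. Composition going to matrix multiplication over $RG$ is a direct check using that composition in $\cC(X)$ is matrix multiplication indexed by $X$, reorganized along orbits.

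The key steps, in order: (1) set up the representative-choosing and verify that $G$-compact plus locally finite support yields finitely many orbits in the support, so the associated $RG$-module is finitely generated free; (2) define the functor on objects and on morphisms and check functoriality (identities and composition), the composition check being the only computation of substance; (3) produce a quasi-inverse: from a finitely generated free $RG$-module $F$ with chosen basis, build the geometric module supported on the orbit of the representatives with $A_{gx_0}$ the relevant free $R$-module summand, and check it is $G$-invariant with $G$-compact support; (4) exhibit natural isomorphisms between the two composites and the identities, and note independence (up to natural isomorphism) from the choice of representatives.

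The main obstacle I anticipate is step (1) — more precisely, making the bookkeeping between the indexing set $X$ and the group $G$ clean enough that the finiteness conditions translate transparently. One has to check simultaneously that: the support, being $G$-compact and locally finite, has finite orbit space (this uses that a compact set meets a locally finite set finitely, and that $GK$ for compact $K$ has the same orbits as $K$); the row- and column-finiteness conditions on morphisms in $\cC(X)$ become exactly the condition that each matrix entry lies in $RG$ (a finite $R$-linear combination of group elements) rather than in some completion; and that all of this is compatible with the $G$-action. None of these is deep, but getting the quantifiers right is where care is needed. I would also remark that this is essentially \cite{bfjr}*{Lemma 2.10}, so it suffices to indicate the equivalence and refer there for the details that are identical.
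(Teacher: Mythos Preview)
Your proposal is correct and matches the paper's approach: the paper does not give a proof but simply cites \cite{bfjr}*{Lemma 2.10} and then describes the equivalence functor $\U$ on objects by $\U(A)=\bigoplus_{x\in X}A_x$ (with the $RG$-structure coming from the $G$-action permuting summands) and on morphisms by packaging the components into the $RG$-matrix $\U(\varphi)_{(s,t)}=\sum_{g\in G}g[\varphi^{gs}_t]$ indexed by a set $S$ of orbit representatives. This is exactly your construction up to the natural identification $\bigoplus_{x\in X}A_x\cong \bigoplus_{x_0\in S}RG\otimes_R A_{x_0}$, and your finiteness discussion in step~(1) is what the paper summarizes in the sentence that the locally finite and $G$-compact conditions guarantee $|(S\times S)\cap\supp(\varphi)|<\infty$.
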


Given a free $G$-space $X$, we will write
\[\U:\cC^G(X;\cS^X_{Gc})=\cC^G(X;\cS^X_{Gc},R)\to \cC(\pt,RG)=\fF_{RG}\]
for the functor that induces the equivalence of the previous Lemma:
\begin{align*}
\mathbb{U}(A)&= \bigoplus_{x \in X}A_x\\
\mathbb{U}(\varphi:A\to B)&=\bigoplus_{(x,y)\in\supp(\varphi)}\varphi_x^y:\bigoplus_{x \in X}A_x \to \bigoplus_{y \in X}B_y
\end{align*}

 Note that, if we fix a basis for every finitely generated free $R$-module,   $\mathbb{U}(\varphi)$ is a matrix indexed by $\supp(\varphi)$ such that each entry is a finite matrix $[\varphi_x^y]$ with coefficients in $R$. Using the $G$-invariance property of objects and morphisms in $\cC^G(X;\cS_{Gc})$, we will interpret $\mathbb{U}(\varphi)$ as a finite matrix with coefficients in $RG$. For $S$ a complete set of representatives of $G\backslash X$, we will abuse notation and write
\[\mathbb{U}(\varphi)_{(s,t)}=\sum_{g\in G} g[\varphi^{gs}_t], \hspace{0.4cm} \forall (s,t)\in (S\times S)\cap \supp(\varphi).\]
 The locally finite and $G$-compact conditions for the support of objects in $\cC^G(X;\cS^X_{Gc})$ guarantees that $|(S\times S)\cap \supp(\varphi)|<\infty.$
\subsection{Resolutions} The construction in the previous lemma allows us to identify a finitely generated free $RG$-module with a geometric $R$-module over a \emph{free} $G$-space $X$. Following \cite{bfjr}, this restriction is avoided introducing \emph{resolutions}.

\begin{defn}(c.f. \cite{bfjr}*{Section 3})
Given a $G$-space $X$, a \emph{resolution} of $X$ is a free $G$-space $\overline{X}$ together with a continuous $G$-map $p:\overline{X}\to X$ satisfying the following conditions:
\begin{itemize}
\item the action of $G$ on $\overline{X}$ is properly discontinuous and the orbit space $G\backslash X$ is Hausdorff (this is always the case if $\overline{X}$ is a $G$-CW-complex);
\item for every $G$-compact set $GK\subset X$ there exists a $G$-compact set $G\overline{K}\subset \overline{X}$ such that $p(G\overline{K})=GK$.
\end{itemize} 
\end{defn}

\begin{rem}
The projection  $X \times G \to X$ is always a resolution of $X$ called the \emph{standard resolution}.
\end{rem}

Let $p:\overline{X}\to X$ be a resolution of a $G$-space $X$, and let $\pi:\overline{X}\times[1,\infty)\to \overline{X}$ be the projection. We will abuse notation and set

\[(p\times \text{id})^{-1}(\cE^X_{Gcc})=\left\{((p\times \text{id})^2)^{-1}(E) : E\in \cE^X_{Gcc} \right\}, \]
\[\pi^{-1}(\cS^{\overline{X}}_{Gc})=\left\{ \pi^{-1}(S): S \in \cS^{\overline{X}}_{Gc}\right\}.\]

\smallskip

We define the following additive subcategories of $\cC^G(\overline{X})$ and $\cC^G(\overline{X} \times [1,\infty))$:

\begin{itemize}
\item $\cT^G(\overline{X})= \cC^G(\overline{X};\cS_{Gc}^{\overline{X}})$, the category of $G$-invariant geometric $R$-modules over $\overline{X}$ whose support is contained in a $G$-compact subset of $\overline{X}$.

\item $\cO^G(\overline{X})= \cC^G(\overline{X}\times[1,\infty);(p\times \text{id})^{-1}(\cE^X_{Gcc}), \pi^{-1}(\cS^{\overline{X}}_{Gc}))$, the category whose objects are $G$-invariant geometric $R$-modules over $\overline{X}\times[1,\infty)$ with support contained in some $S\in \pi^{-1}(\cS^{\overline{X}}_{Gc})$, and whose morphisms are the $G$-invariant $R$-module morphisms with support contained in some $E\in (p\times \text{id})^{-1}(\cE^X_{Gcc})$.

\item $\cD^G(\overline{X})= \cO^G(\overline{X})^\infty$, the category of germs at infinity. It has the same objects as $\cO^G(\overline{X})$, but morphisms are identified if their difference can be factored over an object whose support is contained in $\overline{X}\times [1,r]$ for some $r \in [1,\infty)$.
\end{itemize}

When considering the standard resolution $G\times X\to X$ we write $\cT^G(X)=\cT^G(G\times X)$, $\cO^G(X)=\cO^G(G\times X)$ and $\cD^G(X)=\cD^G(G\times X)$.
\begin{thm}(c.f. \cite{bfjr}*{Proposition 3.5}). Let $X$ be a $G$-space and $p:\overline{X}\to X$ and $p':\overline{X}'\to X$ be two resolutions of $X$. Then the germ categories $\cD^G(\overline{X})$ and $\cD^G(\overline{X}')$ are equivalent.
\end{thm}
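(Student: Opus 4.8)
The plan is to follow the strategy of \cite{bfjr}*{Proposition 3.5} and reduce to a single ``universal'' resolution that dominates both. Given two resolutions $p:\overline{X}\to X$ and $p':\overline{X}'\to X$, the first step is to form the fibre product $\overline{X}''=\overline{X}\times_X\overline{X}'$ with its two projections $q:\overline{X}''\to\overline{X}$ and $q':\overline{X}''\to\overline{X}'$. One checks that $\overline{X}''$, equipped with the diagonal $G$-action and the map $p\circ q=p'\circ q':\overline{X}''\to X$, is again a resolution of $X$: the action is free and properly discontinuous because it is so on each factor, the orbit space is Hausdorff, and the $G$-compactness lifting property for $\overline{X}''$ follows by lifting a given $G$-compact set first to $\overline{X}$ and then, using the lifting property of $p'$ together with properness, to $\overline{X}''$. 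By symmetry it therefore suffices to prove the statement in the special case where one resolution factors through the other, i.e. there is a $G$-map $q:\overline{X}'\to\overline{X}$ over $X$.

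In that situation, the second step is to show that $q$ induces a functor $q_*:\cD^G(\overline{X}')\to\cD^G(\overline{X})$ and that this functor is an equivalence. The functor on the level of $\cO^G$ is the pushforward along $q\times\mathrm{id}:\overline{X}'\times[1,\infty)\to\overline{X}\times[1,\infty)$: it sends an object $(A_{(y,t)})$ to the object whose module over $(x,t)$ is $\bigoplus_{y\in q^{-1}(x)}A_{(y,t)}$, and similarly on morphisms. One must verify that this lands in $\cO^G(\overline{X})$, namely that the image of a $\pi^{-1}(\cS^{\overline{X}'}_{Gc})$-controlled support is $\pi^{-1}(\cS^{\overline{X}}_{Gc})$-controlled (clear, since $q$ sends $G$-compact sets to $G$-compact sets) and that the image of a $(p'\times\mathrm{id})^{-1}(\cE^X_{Gcc})$-controlled morphism support is $(p\times\mathrm{id})^{-1}(\cE^X_{Gcc})$-controlled --- this is immediate from the identity $p\circ q=p'$, which makes the two pullback control conditions literally correspond under $q\times\mathrm{id}$. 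Passing to germs gives $q_*:\cD^G(\overline{X}')\to\cD^G(\overline{X})$.

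For the third step, essential surjectivity and fully faithfulness of $q_*$ on germ categories, the key point is that continuous control at infinity washes out the difference between the fibres of $q$. Given an object $A$ of $\cO^G(\overline{X})$, choose (equivariantly, using the resolution property) a lift of its support to $\overline{X}'$ and a corresponding object $\widetilde A$; then $q_*\widetilde A$ and $A$ differ only in how the modules are distributed among points with the same image in $X$ and the same $[1,\infty)$-coordinate, and since such points become ``infinitesimally close'' near infinity in the $\cE^X_{Gcc}$-sense, one produces an isomorphism $q_*\widetilde A\cong A$ in $\cD^G(\overline{X})$ supported in $(p\times\mathrm{id})^{-1}(\cE^X_{Gcc})$. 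The same mechanism shows that any morphism $A\to B$ in $\cD^G(\overline{X})$ lifts, and that two lifts agree as germs, giving fully faithfulness. I expect this last step --- writing down the comparison isomorphism and checking it genuinely satisfies the equivariant continuous control condition, rather than just the metric estimate --- to be the main obstacle; everything else is a matter of unwinding the definitions of $\cS_{Gc}$, $\cE_{Gcc}$ and their pullbacks. Since all constructions are $G$-invariant by inspection and compatible with the germ relation (factoring through $\overline{X}\times[1,r]$ is preserved), the induced functor is a well-defined equivalence $\cD^G(\overline{X})\simeq\cD^G(\overline{X}')$, as claimed.
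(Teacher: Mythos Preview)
The paper does not actually prove this theorem: it is stated with the reference ``(c.f.\ \cite{bfjr}*{Proposition 3.5})'' and no argument is given, so there is nothing in the paper to compare your proposal against beyond the citation itself.

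Your outline is in the spirit of the argument in \cite{bfjr}, and the reduction via a common refinement followed by a pushforward functor is the right shape. A couple of points would need tightening if you wanted this to stand on its own. First, the claim that the fibre product $\overline{X}\times_X\overline{X}'$ is again a resolution is not automatic: the $G$-compact lifting property for the fibre product does not follow just from lifting to each factor separately, and \cite{bfjr} in fact compares an arbitrary resolution to the \emph{standard} resolution $X\times G$ rather than forming fibre products. Second, for the pushforward $q_*$ to land in $\cO^G(\overline{X})$ you need the direct sum $\bigoplus_{y\in q^{-1}(x)}A_{(y,t)}$ to be finitely generated and the resulting support to be locally finite; this requires a finiteness statement about how $G$-compact supports meet fibres of $q$, which you have not addressed. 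These are exactly the kinds of technicalities that \cite{bfjr} handles, so deferring to that reference---as the paper does---is the honest move here.
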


\begin{lem}\label{times-libre}
Let $X$ and $Y$ be $G$-spaces and suppose the action on $X$ is free. Write  $Y^\tau$ for the space $Y$ with trivial $G$-action. Then $Y \times X$ is isomorphic to $Y^\tau\times X$ as a $G$-space. 
\end{lem}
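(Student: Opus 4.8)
The plan is to produce an explicit $G$-equivariant homeomorphism $\Phi\colon Y\times X\to Y^\tau\times X$, the standard ``untwisting'' (or shearing) map.

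First I would treat the model case $X=G$, with $G$ acting on itself by left translation. Here one sets $\Phi(y,g)=(g^{-1}y,g)$; both $\Phi$ and the map $(y,g)\mapsto(gy,g)$ are continuous (inversion and the action on $Y$ being continuous), so $\Phi$ is a homeomorphism, and for $h\in G$,
\[
\Phi\bigl(h\cdot(y,g)\bigr)=\Phi(hy,hg)=\bigl((hg)^{-1}hy,\,hg\bigr)=\bigl(g^{-1}y,\,hg\bigr)=h\cdot\Phi(y,g),
\]
where on the right $G$ acts only on the second coordinate. This proves $Y\times G\cong Y^\tau\times G$.

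For a general free $G$-space $X$ I would reduce to this case using that $X$ is $G$-homeomorphic to $G\times(G\backslash X)$ with $G$ acting on the first factor; this is where freeness enters, through a continuous section of the quotient map $X\to G\backslash X$, and it holds in particular when $X$ is discrete or a product (such as the standard resolutions $Z\times G$), which covers the free $G$-spaces relevant to this article. Granting this, one chains $G$-homeomorphisms
\[
Y\times X\;\cong\;Y\times\bigl(G\times(G\backslash X)\bigr)\;\cong\;(Y\times G)\times(G\backslash X)^\tau\;\cong\;(Y^\tau\times G)\times(G\backslash X)^\tau\;\cong\;Y^\tau\times X,
\]
the third isomorphism being $\Phi$ applied to the $Y\times G$ factor, with the trivial factor $(G\backslash X)^\tau$ carried along unchanged; composing yields the lemma.

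The verifications of equivariance and bicontinuity of $\Phi$ are routine; the one point that needs care is the reduction $X\cong G\times(G\backslash X)$ in full generality, since the naive set-level shearing $(y,x)\mapsto(g_x^{-1}y,x)$ — where $x=g_x\cdot\sigma(\pi(x))$ for a chosen section $\sigma$ of $\pi\colon X\to G\backslash X$ — is automatically a $G$-equivariant bijection but is continuous precisely when $x\mapsto g_x$ can be taken locally constant. For the free spaces used here this is the case, so the argument goes through as stated.
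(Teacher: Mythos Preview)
Your approach is essentially the same as the paper's: both rest on choosing a section $s\colon G\backslash X\to X$, writing each $x$ as $h_x\cdot s(\rho(x))$, and using the shearing map $(y,x)\mapsto(h_x^{-1}y,x)$. The paper simply writes down this map directly and asserts it is a $G$-isomorphism, without first isolating the model case $X=G$ and without discussing continuity of the section; your detour through $X\cong G\times(G\backslash X)$ unpacks the same construction, and your caution about continuity is a point the paper leaves implicit.
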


\begin{proof}  Let $\rho:X\to G\backslash X$ be the projection to the orbit space and $s: G\backslash X \to X$  a section. If $x\in X$, write $h_x$ for the unique element of $G$ that verifies $x=h_xs(\rho(x))$. Define $\varphi:Y\times X \to Y^\tau\times X$ by $\varphi (y,x)= (h^{-1}_x y,x)$. It is easy to see that $\varphi$ is an isomorphism of $G$-spaces. 
\end{proof}

\begin{lem}\label{restriccion-libre}
Let $X$ be $G$-space such that there exists a subgroup
$K \leq G$ which acts freely on $X$ with the restricted action from $G$. Then $G/K\times X$ (with diagonal action) is a free $G$-space.
\end{lem}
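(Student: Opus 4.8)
The plan is to show directly that every point of $G/K\times X$ has trivial stabilizer under the diagonal $G$-action, where $g\cdot(aK,x)=(gaK,gx)$. First I would fix an arbitrary point $(gK,x)\in G/K\times X$ and suppose $h\in G$ satisfies $h\cdot(gK,x)=(gK,x)$. Unwinding the diagonal action, this amounts to the two conditions $hgK=gK$ in $G/K$ and $hx=x$ in $X$.

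The next step is to extract the consequence of the first condition. Since $hgK=gK$ is equivalent to $g^{-1}hg\in K$, setting $k:=g^{-1}hg$ we have $h=gkg^{-1}$ with $k\in K$. Substituting this into the second condition gives $gkg^{-1}x=x$, that is, $k\cdot(g^{-1}x)=g^{-1}x$, so $k$ fixes the point $g^{-1}x\in X$. As $K$ acts freely on $X$ by hypothesis, we conclude $k=e$, and therefore $h=geg^{-1}=e$. Since $(gK,x)$ was arbitrary, the diagonal $G$-action on $G/K\times X$ is free.

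The argument is entirely formal and I do not anticipate any genuine obstacle: the hypothesis enters only through the final appeal to freeness of the $K$-action on $X$, and the sole piece of bookkeeping is the translation $hgK=gK\iff g^{-1}hg\in K$, after which one works with the conjugate subgroup $gKg^{-1}$. (This is precisely the structural input one needs to fit a non-free $G$-space such as $\R$, equipped with its $\langle t\rangle$- or $\Dinf$-action whose restriction to a finite-index subgroup is free, into the resolution framework of \cite{bfjr}.)
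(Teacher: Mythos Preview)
Your argument is correct and is essentially identical to the paper's proof, up to swapping the roles of the variable names $g$ and $h$ for the acting element and the coset representative. Both proceed by unwinding the diagonal action, conjugating into $K$, and invoking freeness of the $K$-action.
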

\begin{proof}
Suppose $g(hK,x) = (hK,x)$. Then $ghK=hK$ and there exists $k \in K$ with $g=hkh^{-1}$. Then $hkh^{-1}x=x$ and $k(h^{-1}x)=h^{-1}x$.  But since the action of $K$ over $X$ is free, this implies that $k=1_G$ and then $g=1_G$. This concludes that $G/K\times X$ is free.
\end{proof}

\begin{lem}\label{times-discreto}
Let $X$ be a $G$-space and $Y$ a discrete space with trivial $G$-action. Then $\mathcal{C}^G(Y \times X;\cS_{Gc}^{Y\times X})$ is equivalent to
$\mathcal{C}^G(X;\cS_{Gc}^X)$.
\end{lem}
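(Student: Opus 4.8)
The plan is to exhibit an equivalence of additive categories directly, by sending a geometric module over $Y\times X$ to its ``column sums'' along $Y$. First I would recall that $Y$ is discrete with trivial $G$-action, so for a $G$-invariant geometric module $A=(A_{(y,x)})$ over $Y\times X$ the support condition in $\cS_{Gc}^{Y\times X}$ forces the support to meet only finitely many slices $\{y\}\times X$ (local finiteness in the $Y$-direction) while remaining $G$-compact in the $X$-direction. Hence the module
\[
\Phi(A)_x \;=\; \bigoplus_{y\in Y} A_{(y,x)}
\]
is a well-defined $G$-invariant geometric module over $X$ whose support lies in a $G$-compact set, i.e. an object of $\cC^G(X;\cS_{Gc}^X)$. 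On morphisms, given $\varphi:A\to B$ with components $\varphi_{(y,x)}^{(y',x')}$, I would set $\Phi(\varphi)_x^{x'}=(\varphi_{(y,x)}^{(y',x')})_{y,y'}$, a block matrix which is a finite sum of $R$-module maps by the support hypotheses on $\varphi$; one checks $\Phi$ respects composition (it is just regrouping a matrix product by the $Y$-index) and sends $G$-invariant data to $G$-invariant data since $G$ fixes $Y$ pointwise.

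Next I would construct a quasi-inverse. Pick a point $y_0\in Y$ when $Y\neq\emptyset$ is not enough, so instead I would observe that $\Phi$ is essentially surjective because any object $M$ over $X$ is isomorphic to $\Phi$ applied to the module concentrated on $\{y_0\}\times X$ with value $M_x$ at $(y_0,x)$; more structurally, the inclusion $\{y_0\}\times X\hookrightarrow Y\times X$ induces a functor $\iota_*$ that is a section of $\Phi$ up to natural isomorphism. To see $\Phi$ is fully faithful, note that a morphism $M\to N$ in $\cC^G(X)$ is a single matrix $(\psi_x^{x'})$, while a morphism $\iota_*M\to\iota_*N$ is forced to be concentrated on the $y_0$-slice, and $\Phi$ identifies these two descriptions bijectively; the remaining morphisms of $Y\times X$ between general objects are reconstructed from their $\Phi$-images together with the splitting $A\cong\bigoplus_y A|_{\{y\}\times X}$ coming from the Remark that restriction to a disjoint union is a direct sum. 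Putting this together, $\Phi$ is an equivalence of additive categories.

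The step I expect to be the main obstacle is the careful bookkeeping that the support conditions transfer correctly in both directions: one must verify that $\cS_{Gc}^{Y\times X}$-support of $A$ precisely corresponds to $\cS_{Gc}^X$-support of $\Phi(A)$ (using that $Y$ discrete makes $G$-compact subsets of $Y\times X$ finite unions of $G$-compact subsets of $X$-slices), and that the ``finitely many nonzero entries per row/column'' condition for morphisms over $Y\times X$ matches the analogous condition over $X$ after grouping by $Y$. None of this is deep, but it is where the discreteness and triviality of the $G$-action on $Y$ are genuinely used, and it is easy to state the functors without realizing one has not checked that $\Phi(A)$ actually lies in the subcategory with the prescribed support family. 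I would also remark that this lemma is the special case of Lemma~\ref{times-libre}-type reasoning where freeness is replaced by discreteness, and that it will be applied with $Y$ a discrete model for a homogeneous space in the later sections.
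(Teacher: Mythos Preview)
Your proposal is correct and follows essentially the same approach as the paper: both arguments use that a $G$-compact subset of $Y\times X$ meets only finitely many slices $\{y\}\times X$, and then collapse the $Y$-direction by taking direct sums. The only cosmetic difference is that the paper factors the equivalence through the intermediate category $\bigoplus_{y\in Y}\cC^G(X;\cS_{Gc}^X)$ before summing the components, whereas you compose these two steps into a single functor $\Phi$.
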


\begin{proof}
Let $A$ be an object of $\cC^G(Y\times X; \cS_{Gc}^{Y\times X})$, then $\supp(A)$ is locally finite and contained in a $G$-compact subset of $Y\times X$. Hence, there exist $y_1,y_2,...y_n \in Y$ and $G$-compact subsets $F_1, F_2,\ldots,F_n$ of $X$ such that $\supp(A)\subseteq \bigcup_{i=1}^{n} \{y_i\}\times F_i$.  Consider the direct sum of additive categories $\bigoplus_{y\in Y} \cC^G(X;\cS_{Gc}^X)$  and define the functor 
\begin{gather*}
F:\cC^G(Y\times X; \cS_{Gc}^{Y\times X}) \to \bigoplus_{y\in Y} \cC^G(X;\cS_{Gc}^X) \text{ by} \\
F(A)=\{(A_{(y,x)})_{x\in X}\}_{y\in Y},\\
F(\varphi:A\to B)=\{ (\varphi_{(y_1,x_1)}^{(y_2,x_2)})_{x_1,x_2\in X}:F(A)_{y_1} \to F(B)_{y_2}\}_{y_1,y_2 \in Y}.
\end{gather*}
 Clearly $F$ induces an equivalence of categories. Moreover, sending an element of $\bigoplus_{y\in Y} \cC^G(X;\cS_{Gc}^X)$ to the direct sum of its components induces an equivalence $\bigoplus_{y\in Y} \cC^G(X;\cS_{Gc}^X) \cong  \cC^G(X;\cS_{Gc}^X)$.
 \end{proof}

\begin{rem}\label{times-control}
Note that in the previous lemma we haven't specified any control conditions on the morphisms.  In the isomorphism
\[ \bigoplus_{y \in Y} \cC^G(X \times [1,\infty);\cE_{Gcc}^X, \pi^{-1}(\cS_{Gc}^X)) \cong  \cC^G(X\times [1,\infty); \cE_{Gcc}^X,\pi^{-1}(\cS_{Gc}^X)), \]
multiple copies of the direct sum may be used but no condition on morphisms 
--- other than finiteness in rows and columns and $G$-equivariance, which the direct sum already
satisfies --- are enforced; this means that in order to have an isomorphism
\[ \bigoplus_{y \in Y} \cC^G(X \times [1,\infty);\cE_{Gcc}^X, \pi^{-1}(\cS_{Gc}^X))\cong 
\cC^G(Y\times X \times [1,\infty); \cE,\pi^{-1}(\cS_{Gc}^{X\times Y})), \]
the control condition $\cE$ isn't given by $\cE_{Gcc}^{Y\times X}$. Instead,
if $p:Y\times X\to X$ is the projection, then 
\[ \cE = (p\times id)^{-1}(\cE_{Gcc}^X).\]
\end{rem}

 \begin{cor}\label{tilde} If $X$ is a free $G$-space we have the following equivalences of categories:
 \begin{align*}
 \cT^G(X)&\cong \widetilde{\cT}^G(X):=\cC^G(X,\cS_{Gc}^X) ;\\
 \cO^G(X)&\cong \widetilde{\cO}^G(X):= \cC^G(X\times [1,\infty),\cE_{Gcc}^X, \pi^{-1}(\cS_{Gc}^X));\\
 \cD^G(X)&\cong \widetilde{\cD}^G(X):=\tilde{\cO}^G(X)^\infty;
 \end{align*}
 where  $\pi:X\times[1,\infty)\to X$ is the projection. 
\end{cor}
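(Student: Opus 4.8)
The plan is to reduce all three equivalences to Lemma~\ref{times-libre}, Lemma~\ref{times-discreto} and Remark~\ref{times-control}. Recall that, by definition, $\cT^G(X)$, $\cO^G(X)$ and $\cD^G(X)$ are computed from the standard resolution $p:G\times X\to X$ (the projection), where $G$ acts on $G\times X$ diagonally via left translation on the $G$-factor; in particular the control condition defining $\cO^G(X)=\cO^G(G\times X)$ is $(p\times\id)^{-1}(\cE^X_{Gcc})$ with $p$ this projection. For the $\cT$-statement, apply Lemma~\ref{times-libre} with $Y=G$ (as a $G$-space via left translation) and the free $G$-space $X$: it produces a $G$-homeomorphism $\phi:G\times X\xrightarrow{\cong}G^\tau\times X$ onto the product with trivial $G$-action on the discrete set $G^\tau$. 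A $G$-homeomorphism matches $G$-compact subsets, so it induces an isomorphism $\cT^G(X)=\cC^G(G\times X;\cS^{G\times X}_{Gc})\cong\cC^G(G^\tau\times X;\cS^{G^\tau\times X}_{Gc})$, and Lemma~\ref{times-discreto} (with $Y=G^\tau$) identifies the target with $\cC^G(X;\cS^X_{Gc})=\widetilde\cT^G(X)$.

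For the $\cO$-statement, note first that the homeomorphism $\phi$ of Lemma~\ref{times-libre} satisfies $q\circ\phi=p$, where $q:G^\tau\times X\to X$ is the projection (immediate from the formula $\phi(g,x)=(h_x^{-1}g,x)$ in the proof of that lemma). Hence under $\phi\times\id:(G\times X)\times[1,\infty)\xrightarrow{\cong}(G^\tau\times X)\times[1,\infty)$ the condition $(p\times\id)^{-1}(\cE^X_{Gcc})$ goes to $(q\times\id)^{-1}(\cE^X_{Gcc})$, and $\pi^{-1}(\cS^{G\times X}_{Gc})$ goes to $\pi^{-1}(\cS^{G^\tau\times X}_{Gc})$; so $\cO^G(X)\cong\cC^G(G^\tau\times X\times[1,\infty);(q\times\id)^{-1}(\cE^X_{Gcc}),\pi^{-1}(\cS^{G^\tau\times X}_{Gc}))$. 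Now Remark~\ref{times-control} with $Y=G^\tau$ applies — and this is exactly the point of that remark, since the control condition here is the one pulled back from $X$ rather than $\cE^{G^\tau\times X}_{Gcc}$ — identifying this category with $\bigoplus_{g\in G}\cC^G(X\times[1,\infty);\cE^X_{Gcc},\pi^{-1}(\cS^X_{Gc}))$, which collapses by summing components (exactly as in the last line of the proof of Lemma~\ref{times-discreto}) to $\widetilde\cO^G(X)$.

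Finally, $\cD^G(X)=\cO^G(X)^\infty$ and $\widetilde\cD^G(X)=\widetilde\cO^G(X)^\infty$ arise from the two preceding categories by the same germ-at-infinity construction, and every identification used above leaves the $[1,\infty)$-coordinate untouched, so the equivalence $\cO^G(X)\cong\widetilde\cO^G(X)$ carries objects supported in $(G\times X)\times[1,r]$ to objects supported in $X\times[1,r]$; such an equivalence descends to germ categories, giving $\cD^G(X)\cong\widetilde\cD^G(X)$. The only genuine work is tracking the support and control conditions through $\phi$ and through the direct-sum collapse, and the step most prone to error is precisely the one flagged in Remark~\ref{times-control}: after trivializing the $G$-factor one must keep the control condition pulled back from $X$, not pass to the continuous control condition of $G^\tau\times X$.
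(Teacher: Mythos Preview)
Your proof is correct and follows exactly the approach the paper intends: its one-line proof ``Take $Y=G$ in Lemmas~\ref{times-libre} and~\ref{times-discreto}'' is precisely the argument you spell out. Your added verifications---that the homeomorphism $\phi$ of Lemma~\ref{times-libre} intertwines the two projections to $X$ (so the pulled-back control conditions match), and that Remark~\ref{times-control} is what handles the morphism control for $\cO^G$---are the details the paper leaves implicit, and your final remark about the $\cD$-equivalence descending to germs is likewise routine but worth stating.
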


\begin{proof}
Take $Y=G$ in Lemmas \ref{times-libre} and \ref{times-discreto}.
\end{proof}

\begin{defn}\label{def sz}
Let $X$ be a $G$-space equipped with a $G$-invariant metric $d_G$. Let $p:\overline{X}\to X$ be a
resolution and $\varphi \in \cT^G(\overline{X})$. We define the \emph{size} of
$\varphi$ as the supremum of the distances between the components of $\varphi$ when
projected to $X$:
\[ \size(\varphi) = \sup \{ d_G(p(\overline{x}),p(\overline{y})) :
(\overline{x},\overline{y}) \in \supp{\varphi}\}\]

We also extend this size to morphisms in $\cO^G(\overline{X})$.
In this case we define the \emph{horizontal size} of $\varphi$ by measuring
the distance in $X$:
\[\hsi(\varphi):= \sup \{d_G(p(\bar{x}),p(\bar{y})) : (\bar{x},t,\bar{y},s)\in \supp(\varphi)\}\]
If we view $\cT^G(\overline{X})$ as a subcategory of $\cO^G(\overline{X})$ then we have that $\size(\varphi)=\hsi(\varphi)$.

Also, since $\varphi\in \cO^G(\overline{X})$, it satisfies the control condition
at $\infty$, hence, there exists $E\in \cE_{Gcc}^X$ such that 
\[\supp(\varphi)\subseteq \left\{ (\bar{x},t,\bar{y},s) \in (\overline{X}\times[1,\infty))^2 : (p(\bar{x}),t,p(\bar{y}),s) \in E\right\}.\]
By the second condition in the definition of $\cE_{Gcc}^X$, there exists $\delta >0$ such that
\begin{equation}\label{vsz}
\forall (\bar{x},t,\bar{y},s)\in \supp(\varphi), |t-s|\leq \delta.
\end{equation}
We can then define the \emph{vertical size} of $\varphi$ by
\[\vsi(\varphi)=\inf\left\{\delta\geq 0 : \delta \text{ satisfies } \eqref{vsz}\right\}.\]

\end{defn}

\subsection {Assembly map.} In \cite{dl}, Davis and L\"uck associate to every $G$-$CW$-complex a spectrum $H^G(X,\mathbf{K}(R))$ whose homotopy groups define a $G$-equivariant homology theory with the following property:
\[
H^G_*(G/H, \textbf{K}(R)) \cong K_*(RH), \qquad \forall H \text{ subgroup of } G.
\]

Let $\mathcal{F}$ be a family of subgroups of $G$, i.e. a nonempty collection of subgroups closed under conjugation and subgroups. The classifying space $E_{\mathcal{F}}G$ is the universal $G$-space for actions with isotropy in $\cF$. This is a $G$-$CW$-complex characterized up to $G$-homotopy equivalence by the property that, for any subgroup $H$ of $G$, the $H$-fixed point space $E_{\mathcal{F}}G^H$ is empty if $H\notin \mathcal{F}$, and contractible if $H\in \mathcal{F}$. Note that when $\cF$ is just the trivial subgroup, the space $E_\cF G$ is the usual classifying space $EG$. 

The \emph{assembly map} is the map induced by the projection to the one point space $E_{\mathcal{F}}G\to G/G=\pt$:
\begin{equation}\label{assem}\text{assem}_{\mathcal{F}}: H^G(E_{\mathcal{F}}G,\textbf{K}(R)) \to H^G(G/G,\textbf{K}(R)) \cong \textbf{K}(RG).
\end{equation}

For $\cF=\mathcal{V}cyc$ the family of virtually cyclic subgroups, the \emph{Farrell-Jones conjecture} asserts that the assembly map 
\begin{equation}\label{f-j}\text{assem}_{\mathcal{V}cyc_*}: H^G_*(E_{\mathcal{V}cyc}G,\textbf{K}(R)) \to H^G_*(\pt,\textbf{K}(R)) \cong K_*(RG)
\end{equation}
is an isomorphism.

The assembly map can also be interpreted through means of controlled topology, as we proceed to explain. Using the map induced by the inclusion $\left\{1\right\}\subset [1,\infty)$ and the quotient map we obtain the \emph{germs at infinity sequence}
\[ \cT^G(\overline{X})\to\cO^G(\overline{X})\to \cD^G(\overline{X}),\]
where the inclusion can be identified with a Karoubi filtration and $\cD^G(\overline{X})$ with its quotient (see \cite{blr}*{Lemma 3.6}). Hence, there is a homotopy fibration sequence in $K$-theory:
\begin{equation}\label{seq}\textbf{K}(\cT^G(\overline{X}))\to\textbf{K}(\cO^G(\overline{X}))\to\textbf{K}(\cD^G(\overline{X})).\end{equation}

The functor $X\to \textbf{K}(\cD^G(X))$ is a $G$-equivariant homology theory on $G$-$CW$-complexes, and its value at $G/H$ is weakly equivalent to $\Sigma\textbf{K}(RH)$ \cite{bfjr}*{Sections 5 and 6}.

Applying \eqref{seq}  to the projection $E_{\cF}G\to \pt$ we obtain the following commutative diagram with exact rows:
\begin{equation}\label{ladder}
\xymatrix@C13pt{{...}\ar[r]&{K_n(\cO^G(E_{\cF}G))}\ar[r]\ar[d]&{K_n(\cD^G(E_{\cF}G))}\ar[r]^-{\beta_n}\ar[d]_{\alpha_n}&{K_{n-1}(\cT(E_{\cF}G))}\ar[d]^{\gamma_{n-1}}\ar[r]&{...} \\ {...}\ar[r]&{K_n(\cO^G(\pt))}\ar[r]&{K_n(\cD^G(\pt))}\ar[r]_-{\delta_n} &{K_{n-1}(\cT(\pt))}\ar[r]&{...}}
\end{equation}

By \cite{dl}*{Corollary 6.3}, $\alpha_n$ identifies with the assembly map $\text{assem}_{\mathcal{F}_{n-1}}$  \eqref{assem}.
Using the shift $x\mapsto x+1$ it is easy to see that $\cO^G(\pt)$ admits an Eilenberg swindle, hence $K_n(\cO^G(\pt))=0$ and
$\delta_n$ is an isomorphism. Moreover, $\gamma_{n-1}$ is also an isomorphism, because its 
source and target are both isomorphic to $K_{n-1}(RG)$ by Lemma \ref{lem equiv}. 
This explains the choice of notation: $\cO^G(E_\cF G)$ is the \emph{obstruction category}, i.e. the assembly map is a weak equivalence if and only if $K_n(\cO^G(E_{\cF}G))=0$, $\forall n\in\Z$. 

\section{Two examples}\label{sec:twoex}

From now on, we will focus on two particular assembly maps:
\begin{enumerate}[(i)]
    \item\label{BHS} for the group $\ltr$ and the trivial subgroup  family, and
    \item\label{item:Dinf} for the infinite dihedral group $\Dinf$ and the family $\fin$ of finite subgroups.
\end{enumerate}
As we explain below, both assembly maps are isomorphisms if we take a regular ring $R$ as our coefficient ring. For \eqref{BHS}, this amounts to the well-known theorem of Bass-Heller-Swan. For \eqref{item:Dinf}, we will show that the assembly map is an isomorphism using the equivariant Atiyah-Hirzebruch spectral sequence and a computation of $K_q(R\Dinf)$ made by Davis-Khan-Ranicki \cite{ranicki}. Throughout this section, we use no techniques from controlled algebra.

\subsection{The assembly map for \texorpdfstring{$\ltr$}{<t>}}\label{sec:assZ}
As noted in the introduction, a model for $E\ltr$ is the free $\ltr$-space $\R$. The assembly map for $\ltr$ and the trivial subgroup family gives us a morphism
\begin{equation}\label{ass:Z}
\assem_{\cF}:H^\ltr_1(\R, \bfK(R))\to H^\ltr_1(\pt,\bfK(R))\cong K_1(R[t, t^{-1}]).
\end{equation}
We will describe the source of \eqref{ass:Z} in terms of the $K$-theory of $R$ using a Mayer-Vietoris sequence.

We will drop $\mathbf{K}(R)$ from the notation for clarity, and write $H^\ltr_*(?)$ instead of $H^\ltr_*(?,\bfK(R))$.

When regarding $\R$ as a $\ltr$-CW-complex, we only have one $\ltr$-0-cell, which is compromised by the integers $\Z \subseteq \R$.
Then we have one $\ltr$-1-cell with 
attaching map $\alpha:\ltr \times \{0,1\} \to \Z$ defined as 
$\alpha(t^k,\epsilon)= k+\epsilon$.
The following adjunction space pushout describes $\R$ as a $\ltr$-CW-complex:
\[\begin{tikzpicture}
      \matrix (m) [matrix of math nodes, row sep=3em,
      column sep=3em, text height=1.5ex, text depth=0.25ex]
      { \langle t \rangle \times \{0,1\} & \Z \\
        \langle t \rangle \times [0,1] & \R \\};
      \path[->]
      (m-1-1) edge node[auto] {$\alpha$}        (m-1-2)
      (m-1-1) edge node[auto] {$i$}        (m-2-1)
      (m-2-1) edge node[auto] {$\tilde{\alpha}$}        (m-2-2)
      (m-1-2) edge node[auto] {$j$}       (m-2-2);
  \end{tikzpicture}\]
This pushout gives a long Mayer-Vietoris sequence in homology as follows:
\[
  \begin{tikzpicture}
      \matrix (m) [matrix of math nodes, row sep=2.25em,
      column sep=2em, text height=1.4ex, text depth=0.5ex]
      { \ldots &\Ht_1(\ltr \times \{0,1\}) & \Ht_1(\Z) \oplus \Ht_1(\ltr \times [0,1]) \\
               &                           & \Ht_1(\R)                               \\
       \ldots  & \Ht_0(\Z) \oplus \Ht_0(\ltr \times [0,1]) 
                                           & \Ht_0(\ltr \times \{0,1\})             \\};
      \begin{scope}[every node/.style={scale=.75}]                                     
      \path[->]
      (m-1-1) edge node[auto] {}        (m-1-2)
      (m-1-2) edge node[auto] {$(\alpha_*,i_*)$}        (m-1-3)
      (m-1-3) edge node[auto] {$(j_*,-\tilde{\alpha}_*)$}        (m-2-3.north)
      (m-2-3) edge node[auto] {$\partial$}        (m-3-3)
      (m-3-3) edge node[auto] {$(\alpha_*,i_*)$}        (m-3-2)
      (m-3-2) edge node[auto] {}        (m-3-1);
      \end{scope}
      \end{tikzpicture}
      \]
We are only interested in calculating $H^\ltr_1(\R)$
so we will only use the five terms of the sequence depicted here. 
Since $\Ht_*$ is an equivariant homology theory and
$\ltr\times[0,1]$ is equivariantly homotopy equivalent to $\ltr$, the sequence before
is isomorphic to the following one:
\[
  \begin{tikzpicture}
      \matrix (m) [matrix of math nodes, row sep=2.25em,
      column sep=3em, text height=1ex, text depth=0.25ex]
      { \Ht_1(\ltr)^2    & \Ht_1(\ltr)^2 & \Ht_1(\R) \\
                         & \Ht_0(\ltr)^2 & H_0(\ltr)^2 & \\};
      \begin{scope}[every node/.style={scale=.75}]
      \path[->]
      (m-1-1) edge node[auto] {$(\alpha_*,i_*)$}        (m-1-2)
      (m-1-2) edge node[auto] {$(j_*,-\tilde{\alpha}_*)$}        (m-1-3)
      (m-1-3) edge node[auto] {$\partial$}        (m-2-3)
      (m-2-3) edge node[auto] {$(\alpha_*,i_*)$}        (m-2-2);
      \end{scope}
      \end{tikzpicture}
      \]
On the other hand, since we are taking coefficients in the spectrum $\mathbf{K}(R)$,
we have that $\Ht_*(\langle t \rangle) = K_*(R)$.
We observe now that $\alpha_*$ can be
expressed as $(id,id)$: the morphism $\alpha_*$ is defined as the identity on the
first summand and as the shift on the second. The shift induces the identity because
on the $K$-theory of the $R$-linear category $R\mathcal{G}^\ltr(\ltr)$ --- with 
$\mathcal{G}^\ltr(\ltr)$ being the transport groupoid of $\ltr$ ---
the inclusion of $\ltr$ in any element of $\mathcal{G}^\ltr(\ltr)$
induces an equivalence (see \cite{dl}*{Section 2}). Since it induces the same 
equivalence in every inclusion, it is easily seen that it must be the identity.
This implies that the cokernel of $(\alpha_*,i_*):K_1(R)^2 \to K_1(R)^2 $
is isomorphic to $K_1(R)$. Similarly, the kernel of 
$(\alpha_*,i_*):K_0(R)^2 \to K_0(R)^2$ is isomorphic to $K_0(R)$. This gives the following short
exact sequence:
\begin{equation}\label{suc-z}
     0 \to K_1(R) \xrightarrow{\phi} \Ht_1(\R) \to K_0(R) \to 0
\end{equation}
We will now show that this sequence splits. Note as $q:\Ht_1(\ltr)^2 \to \Ht_1(\ltr)$
the quotient map to the cokernel of $(\alpha_*,i_*)$.
The maps induced by the projections $\R \to \pt$ and $\ltr \to \pt$
give a commutative diagram:
\[
  \begin{tikzpicture}
      \matrix (m) [matrix of math nodes, row sep=3em,
      column sep=4em, text height=1.5ex, text depth=0.25ex]
      { \Ht_1(\ltr)^2 & \Ht_1(\R) \\
        \Ht_1(\ltr)   & \Ht_1(\pt) \\};
        \begin{scope}[every node/.style={scale=.75}]
      \path[->]
      (m-1-1) edge node[auto] {$(j_*,-\tilde{\alpha}_*)$}        (m-1-2)
      (m-1-2) edge node[auto] {}        (m-2-2)
      (m-1-1) edge node[left] {$q$}     (m-2-1)
      (m-2-1) edge node[auto] {$\phi$}     (m-1-2)
      (m-2-1) edge node[auto] {}        (m-2-2);
      \end{scope}
      \end{tikzpicture}
      \]
Since $\Ht_1(\pt)=K_1(R\ltr)$ and $\Ht_1(\ltr)=K_1(R)$, 
we can define a map $r:\Ht_1(\pt)\to \Ht_1(\ltr)$, which is
induced in $K$-theory by the ring morphism $R\ltr \to R$ sending
$t$ to $1$. Due to the commutativity of the previous diagram, 
the map $r$ composed with the induced map of the projection $\R \to \pt$
splits \eqref{suc-z}, meaning that $\Ht_1(\R)\cong K_1(R)\oplus K_0(R)$.

\subsection{The assembly map for \texorpdfstring{$\Dinf$}{Dinf}} 
We use the presentation \eqref{presDinf} for $\Dinf$. It is easily seen that every element of $\Dinf$ can be uniquely written as $r^ms^n$ with $m\in\Z$ and $n\in\left\{0,1\right\}$. Moreover, any non-trivial subgroup of $\Dinf$ belongs to one of the following families:
\begin{enumerate}
    \item\label{item:sim} $\langle r^ms \rangle$ with $m\in\Z$,
    \item\label{item:infcyc} $\langle r^m \rangle$ with $m\in\N$,
    \item\label{item:infdih} $\langle r^m, r^ks \rangle$ with $m\in\N$ and $k\in \Z$.
\end{enumerate}
Subgroups of type \eqref{item:sim} have order $2$, those of type \eqref{item:infcyc} are infinite cyclic and those of type \eqref{item:infdih} are isomorphic to $\Dinf$. We will write $H_m$ for the subgroup $\langle r^m s\rangle$ --- note that these are different for different values of $m\in\Z$. Let $\fin$ be the family of finite subgroups of $\Dinf$, consisting of the trivial subgroup
and those subgroups of type \eqref{item:sim}. We are interested in the assembly map for $\Dinf$ and the family $\fin$.

Let $\Dinf$ act on $\R$ on the left by putting
\[r^ms^n\cdot x=m+(-1)^nx\]
for $m\in \Z$, $n\in\left\{0,1\right\}$ and $x\in\R$. The element $r^m$ acts by translation by $m$ and $r^ms$ acts by symmetry with respect to the point $\frac{m}{2}$. Then $E_\fin\Dinf=\R$ since $\R^H=\emptyset$ for $H\not\in\fin$ and $\R^H$ is contractible for $H\in\fin$.
In what follows, we will show that the assembly map for $\Dinf$ and the family $\fin$,
\begin{equation}\label{ass:Dinf}\assem_{\fin}:H^\Dinf_q(\R, \bfK(R))\to H^\Dinf_q(\pt,\bfK(R))\cong K_q(R\Dinf),\end{equation}
is an isomorphism for regular $R$.

For a $\Dinf$-CW-complex $X$, the equivariant Atiyah-Hirzebruch spectral sequence \cite{libroluck}*{Example 10.2} converges to $H_*^\Dinf(X, \bfK(R))$. Its second page is given by:
\[E^2_{pq}=H_p\left(C_\bullet^\OrDinf(X)\otimes_\OrDinf H^\Dinf_q(?,\bfK(R))\right)\]
Here, $C_\bullet^\OrDinf(X):(\OrDinf)^\op\to \Ch$ is the functor that sends a coset $\Dinf/H$ to the cellular chain complex of $X^H$, $H^\Dinf_q(?,\bfK(R)):\OrDinf\to\Ab$ is the restriction of the equivariant homology with coefficients in $\bfK(R)$, and $\otimes_\OrDinf$ stands for the balanced tensor product.
Let us compute the left hand side of \eqref{ass:Dinf}. If $H\subset\Dinf$ is a subgroup of types \eqref{item:infcyc} or \eqref{item:infdih}, then $\R^H=\emptyset$ and so $C_\bullet^\OrDinf(\R)(\Dinf/H)$ is the zero chain complex. Since $\R^{H_m}=\{\frac{m}{2}\}$, $C_\bullet^\OrDinf(\R)(\Dinf/H_m)$ is the abelian group $\Z$ --- generated by the $0$-cell $\frac{m}{2}$ --- concentrated in degree $0$. Finally, $C_\bullet^\OrDinf(\R)(\Dinf/1)$ is the complex
\[\xymatrix{\cdots\ar[r] & 0\ar[r] & \Z^{(\Dinf)}\ar[r]^-{d_1} & \Z^{(\Z)}\oplus\Z^{\left(\Z+\frac12\right)}\ar[r] & 0}\]
where $d_1$ acts on the basic element $g\in\Dinf$ by $d_1(g)=g\cdot \frac12-g\cdot 0$. To ease notation, we will write $A_\bullet$ instead of $C_\bullet^\OrDinf(\R)\otimes_\OrDinf H^\Dinf_q(?,\bfK(R))$. Taking the above into account, $A_n=0$ for $n\neq 0,1$. Moreover, we have
\begin{equation}\label{eq:c1}A_1=\frac{\Z^{(\Dinf)}\otimes K_q(R)}{N_1}\end{equation}
\begin{equation}\label{eq:c0}A_0=\frac{\left(\Z^{(\Z)}\oplus\Z^{(\Z+\frac12)}\right)\otimes K_q(R)\oplus \bigoplus_{m\in\Z}\Z\otimes K_q(RH_m)}{N_0}\end{equation}
where $N_i$ is the subgroup generated by the elements $f^*(x)\otimes y-x\otimes f_*(y)$ for all morphisms $f:\Dinf/H\to \Dinf/K$ in $\OrDinf$. For $A_1$ we only have to consider morphisms $f:\Dinf/1\to\Dinf/1$ and these induce the identity upon applying $H^{\Dinf}_*(?,\bfK(R))$. It follows that $N_1$ is generated by the elements of the form $f^*(x)\otimes y-x\otimes y$. Since $\Dinf$ acts transitively on the $1$-cells of $\R$, all the copies of $K_q(R)$ in the right hand side of \eqref{eq:c1} become identified after dividing by $N_1$ and hence \eqref{eq:c1} is isomorphic to $K_q(R)$. To be precise, any of the inclusions $K_q(R)\to \Z^{(\Dinf)}\otimes K_q(R)$ corresponding to an element of $\Dinf$ induces the same isomorphism $K_q(R)\cong A_1$. The situation for $A_0$ is slightly more complicated: we have to consider morphisms $\Dinf/1\to\Dinf/1$, $\Dinf/1\to \Dinf/H_m$ and $\Dinf/H_m\to\Dinf/H_n$. It is easily verified that
\[\Hom_{\OrDinf}\left(\Dinf/H_m, \Dinf/H_n\right)=\left\{\begin{array}{cc}
    \{\ast\} & \text{if $m\equiv n$ (mod $2$),} \\
    \emptyset & \text{otherwise.}
\end{array}\right.\]
It follows that each summand in $\bigoplus_{m\in\Z}\Z\otimes K_q(RH_m)$ is identified either with $K_q(RH_0)$ or with $K_q(RH_1)$ upon dividing by $N_0$. The action of $\Dinf$ on the $0$-cells of $\R$ has two orbits: $\Z$ and $\Z+\frac12$. It follows that $\left(\Z^{(\Z)}\oplus\Z^{(\Z+\frac12)}\right)\otimes K_q(R)$ is identified with $K_q(R)\oplus K_q(R)$ after dividing by $N_0$ --- one copy of $K_q(R)$ for each orbit of the $0$-cells. Finally, any morphism $\Dinf/1\to\Dinf/H_m$ induces the natural morphism $K_q(R)\to K_q(RH_m)$ upon applying $H^\Dinf_q(?,\bfK(R))$. Thus, the copy of $K_q(R)$ corresponding to the orbit $\Z$ is identified with its image in $K_q(RH_0)$, and the copy of $K_q(R)$ corresponding to $\Z+\frac12$ is identified with its image in $K_q(RH_1)$. Hence \eqref{eq:c0} is isomorphic to $K_q(RH_0)\oplus K_q(RH_1)$. To be precise, the inclusion
\[\xymatrix{K_q(RH_0)\oplus K_q(RH_1)\ar[r] & \bigoplus_{m\in\Z}\Z\otimes K_q(RH_m)}\]
induces an isomorphism $K_q(RH_0)\oplus K_q(RH_1)\cong A_0$. Write $\iota_m:K_q(R)\to K_q(RH_m)$ for the split monomorphism induced by the inclusion $1\subset H_m$. It can be shown that $A_\bullet$ is the complex:
\[\xymatrix{\cdots \ar[r] & 0\ar[r] & K_q(R)\ar[r]^-{(-\iota_0, \iota_1)} & K_q(RH_0)\oplus K_q(RH_1)\ar[r] & 0}\]
Upon taking homology, we get the second page of the Atiyah-Hirzebruch spectral sequence converging to $H^\Dinf_*(\R, \bfK(R))$:
\[E^2_{pq}=H_p(A_\bullet)= \left\{\begin{array}{cc}
K_q(RH_0)\oplus_{K_q(R)}K_q(RH_1) & \text{if $p=0$,} \\
0 & \text{otherwise.}
\end{array}\right.\]
Note that this is actually the infinity-page, and it is easily deduced from it that $H^\Dinf_q(\R,\bfK(R))\cong K_q(RH_0)\oplus_{K_q(R)}K_q(RH_1)$.

Let $B_\bullet$ be the chain complex $C^\OrDinf_\bullet(\pt)\otimes_\OrDinf H^\Dinf_q(?,\bfK(R))$. Note that, for any subgroup $H\subseteq \Dinf$, $C^\OrDinf_\bullet(\pt)(\Dinf/H)$ is the abelian group $\Z$ concentrated in degree $0$. Then $B_n=0$ for $n\neq 0$ and we have:
\[B_0=\frac{\bigoplus_HK_q(RH)}{N_0}\]
Obviously, $B_0\cong K_q(R\Dinf)$. But it follows from \cite{ranicki}*{Corollary 3.27} that, when $R$ is regular, the inclusion $K_q(RH_0)\oplus K_q(RH_1)\to\bigoplus_H K_q(RH)$ induces an isomorphism:
\[\xymatrix{K_q(RH_0)\oplus_{K_q(R)} K_q(RH_1)\ar[r]^-{\cong} & B_0}\]
It is easily seen that the projection $\R\to \pt$ induces the following morphism of chain complexes:
\[\xymatrix@C=2em{A_\bullet\ar[d] & \cdots \ar[r] & 0\ar[r]\ar[d] & K_q(R)\ar[d]\ar[r]^-{(-\iota_0, \iota_1)} & K_q(RH_0)\oplus K_q(RH_1)\ar@{->>}[d]\ar[r] & 0\ar[d] \\
B_\bullet & \cdots \ar[r] & 0\ar[r] & 0\ar[r] & K_q(RH_0)\oplus_{K_q(R)} K_q(RH_1)\ar[r] & 0}\]
Since this is a quasi-isomorphism, the morphism induced between the second pages of the Atiyah-Hirzebruch spectral sequence is an isomorphism. As we have already seen, we can recover $H^\Dinf_q(\R,\bfK(R))$ from this spectral sequence. Hence, the above shows that the assembly map \eqref{ass:Dinf} is an isomorphism.

\begin{rem}
The fact that \eqref{ass:Dinf} is an isomorphism for regular $R$ also follows from \cite{lr}*{Theorem 65} and \cite{dqr}*{Theorem 2.1}.
\end{rem}

\section{Vanishing theorem for \texorpdfstring{$K_1$}{K1}}\label{sec:vanishing}
In this section we show that
automorphisms in $\cO^{\ltr}(\R)$ and $\cO^{\Dinf}(\R)$ with small enough horizontal size have
trivial class in $K_1$.

In the case of $\ltr$ with the trivial subgroup family, the action of $\ltr$ on $E\ltr = \R$ is free.
Then, using Corollary \ref{tilde}, we can use the category $\OGR$ instead of $\cO^{\ltr}(\R)$.
On the other hand, in the case of $\Dinf$ with the family $\cF in$, the action of $\Dinf$ on $E_{\fin}\Dinf=\R$
is not free. But instead of using the standard resolution, we use Lemma \ref{restriccion-libre} 
observing that $\langle r \rangle$ acts freely on $\R$ and $\Dinf/\langle r \rangle
\cong \Z/2$. Hence we can use the resolution $p:\overline{\R}:=\Z/2 \times \R \to \R$
given by the projection and the diagonal action on $\overline{\R}$.
As in Corollary \ref{tilde} we can use the category $\ODR$ instead
of $\cO^{\Dinf}(\R)$ with the standard resolution.
We interpret the resolution $\overline{\R}$ as two different copies of $\R$.

\begin{rem}\label{d-en-z}
The category $\ODR$ can be embedded into
$(\OGR)^{\oplus2}$ by restricting the modules over each copy of $\R\times[1,\infty)$. Because
the action of $r \in \Dinf$ is the same as the action of $t$ in $\R$,
the restricted modules are
also $\ltr$-equivariant. The control conditions are trivially satisfied. It is also important
to note that the control condition on the morphisms is given by Remark \ref{times-control}, this
means that the morphisms on this category can have non-null coordinates from one copy to another
at arbitrary height, that is, this control condition does not control the distance between the copies of 
$\overline{\R}$.
\end{rem}
\begin{rem}\label{rem:s-chico}
Set $\varphi:M\to M$ in $\cT^\Dinf(\overline{\R})$ as
\begin{align*}
 M_{(\epsilon,x)} &= \begin{cases*}
                    R   &if $x\in \Z$, \\
                    0   &otherwise,
                    \end{cases*} \\
\varphi_{(\epsilon,x)}^{(\delta,y)} &= \begin{cases*}
                                    id  &if $\epsilon \neq \delta$ and $x=y \in \Z$, \\
                                    0   &otherwise.
                                    \end{cases*}
\end{align*}
Then $\varphi$ is clearly an automorphism and it is easily seen that 
 $[\mathbb{U}(\varphi)] = [s]\in K_1(R\Dinf)$. Because of the earlier remark,
 $\varphi$ is a $0$-automorphism, so the class $[s]$ is automatically small.
\end{rem}

\subsection{The swindle}
Let $\alpha: A \to B$ be an morphism in $\OGR$ and $I$ an open interval in $\R$ of length 1.
We say that $\alpha$ \emph{restricts to $I$} if $\supp(A)$ and $\supp(B)$ do not intersect
$\partial I \times [1,+\infty)$ and for every $(x,t) \in I \times [1,+\infty)$ and every
$(y,s) \notin I \times [1,+\infty)$ then $\alpha_{(x,t)}^{(y,s)} = 0$ and $\alpha_{(y,s)}^{(x,t)} = 0$.

Let $\alpha:A \to B$ be a morphism in $\OGR$ that restricts to an interval $I$.
Set then $I = (a,a+1)$.
We proceed to squeeze $\alpha$ to $I$ in the following way. 

For each natural number let $f_n:\R \to \R$ be the function that at each interval
$I+k$ interpolates
linearly the endpoints to $a+k+\frac{1}{2}-\frac{1}{2n}$ and
$a+k+\frac{1}{2}+\frac{1}{2n}$ respectively,
\[ f_n(x) = a+k+\frac{1}{2} - \frac{1}{2n} + \frac{(x-a-k)}{n} \text{ if } x\in[a+k,a+k+1).\]
Note that since each $f_n$ is defined over an interval of length 1,
we have that $f_n(x+1)=f_n(x)+1$
and thus, is equivariant for the action of $\langle t \rangle$.
Although each $f_n$ depends on the choice of the interval $I$, we drop it from the notation
as it will be clear from the context which interval we are using. Observe that $f_1=id$.
    
Let $(\tau_n)_{n\in\N}$ be an unbounded strictly increasing sequence of 
real numbers in $[1,\infty)$ with $\tau_1=1$.
We define the \emph{$n$th layer of the squeezing of $A$ (with
heights $\tau_n$)}, noted by $S_n(A)$, as the geometric module given by
\begin{equation}
    S_n(A)_{(x,t)} = \begin{cases*}
                    A_{(f_{n}^{-1}(x),t+1-\tau_n)} &if  $t\geq \tau_n$
                                                    and $x\in \mathrm{Im} f_n$ \\
                    0                              &if else.
                    \end{cases*}
\end{equation} 
 Note that $S_1(A)=A$.

Intuitively, $S_n(A)$ is the geometric module given by rising $A$
to $\tau_{n}$ and then squeezing it to the midpoint of the interval $I$.

\[\begin{tikzpicture}
\draw[<->, black!70] (8,0) node[right, black]{$\mathbb{R}$} -- (0,0) --
(0,5) node[above, black]{$[1,+\infty)$};

\shade[bottom color=gray, top color=white] (2-1,0) -- (2-1,5) -- (2+1,5) -- (2+1,0) -- cycle;
\shade[bottom color=gray, top color=white] (2-1,1) -- (2-1,5) -- (2+1,5) -- (2+1,1) -- cycle;
\shade[bottom color=gray, top color=white] (2-2/3,2) -- (2-2/3,5) -- (2+2/3,5) -- (2+2/3,2) -- cycle;
\shade[bottom color=gray, top color=white] (2-2/4,3) -- (2-2/4,5) -- (2+2/4,5) -- (2+2/4,3) -- cycle;
\shade[bottom color=gray, top color=white] (2-2/5,4) -- (2-2/5,5) -- (2+2/5,5) -- (2+2/5,4) -- cycle;

\shade[bottom color=gray, top color=white] (4-1,0)   -- (4-1,5)   -- (4+1,5)   -- (4+1,0) -- cycle;
\shade[bottom color=gray, top color=white] (4-1,1)   -- (4-1,5)   -- (4+1,5)   -- (4+1,1) -- cycle;
\shade[bottom color=gray, top color=white] (4-2/3,2) -- (4-2/3,5) -- (4+2/3,5) -- (4+2/3,2) -- cycle;
\shade[bottom color=gray, top color=white] (4-2/4,3) -- (4-2/4,5) -- (4+2/4,5) -- (4+2/4,3) -- cycle;
\shade[bottom color=gray, top color=white] (4-2/5,4) -- (4-2/5,5) -- (4+2/5,5) -- (4+2/5,4) -- cycle;

\shade[bottom color=gray, top color=white] (6-1,0)   -- (6-1,5)   -- (6+1,5)   -- (6+1,0) -- cycle;
\shade[bottom color=gray, top color=white] (6-1,1)   -- (6-1,5)   -- (6+1,5)   -- (6+1,1) -- cycle;
\shade[bottom color=gray, top color=white] (6-2/3,2) -- (6-2/3,5) -- (6+2/3,5) -- (6+2/3,2) -- cycle;
\shade[bottom color=gray, top color=white] (6-2/4,3) -- (6-2/4,5) -- (6+2/4,5) -- (6+2/4,3) -- cycle;
\shade[bottom color=gray, top color=white] (6-2/5,4) -- (6-2/5,5) -- (6+2/5,5) -- (6+2/5,4) -- cycle;

\draw[dotted, black!70] (0,1) node[left, black]{$S_2(A)$} -- (8,1);
\draw[dotted, black!70] (0,2) node[left, black]{$S_3(A)$} -- (8,2);
\draw[dotted, black!70] (0,3) node[left, black]{$S_4(A)$} -- (8,3);
\draw[dotted, black!70] (0,4) node[left, black]{$S_5(A)$} -- (8,4);
\draw (0,0) node[left]{$S_1(A)=A$};

\draw[dotted, black!70] (2,5) -- (2,0);
\draw[dotted, black!70] (4,5) -- (4,0);
\draw[dotted, black!70] (6,5) -- (6,0);

\draw (1,-0.75) node[above]{$a-1$};
\draw (3,-0.75) node[above]{$a$};
\draw (5,-0.75) node[above]{$a+1$};
\draw (7,-0.75) node[above]{$a+2$};

\newlength{\gr}
\setlength{\gr}{1.2pt}

\draw[(-), line width=\gr] (2-1,0) -- (2+1,0);
\draw[(-), line width=\gr] (2-1,1) -- (2+1,1);
\draw[(-), line width=\gr] (2-2/3,2) -- (2+2/3,2);
\draw[(-), line width=\gr] (2-2/4,3) -- (2+2/4,3);
\draw[(-), line width=\gr] (2-2/5,4) -- (2+2/5,4);

\draw[dotted, black!70] (2-1,0)   -- (2-1,5);
\draw[dotted, black!70] (2-1,1)   -- (2-1,5);
\draw[dotted, black!70] (2-2/3,2) -- (2-2/3,5);
\draw[dotted, black!70] (2-2/4,3) -- (2-2/4,5);
\draw[dotted, black!70] (2-2/5,4) -- (2-2/5,5);
\draw[dotted, black!70] (2+1,0)   -- (2+1,5);
\draw[dotted, black!70] (2+1,1)   -- (2+1,5);
\draw[dotted, black!70] (2+2/3,2) -- (2+2/3,5);
\draw[dotted, black!70] (2+2/4,3) -- (2+2/4,5);
\draw[dotted, black!70] (2+2/5,4) -- (2+2/5,5);

\draw[(-), line width=\gr] (4-1,0) -- (4+1,0);
\draw[(-), line width=\gr] (4-1,1) -- (4+1,1);
\draw[(-), line width=\gr] (4-2/3,2) -- (4+2/3,2);
\draw[(-), line width=\gr] (4-2/4,3) -- (4+2/4,3);
\draw[(-), line width=\gr] (4-2/5,4) -- (4+2/5,4);

\draw[dotted, black!70] (4-1,0)   -- (4-1,5);
\draw[dotted, black!70] (4-1,1)   -- (4-1,5);
\draw[dotted, black!70] (4-2/3,2) -- (4-2/3,5);
\draw[dotted, black!70] (4-2/4,3) -- (4-2/4,5);
\draw[dotted, black!70] (4-2/5,4) -- (4-2/5,5);
\draw[dotted, black!70] (4+1,0)   -- (4+1,5);
\draw[dotted, black!70] (4+1,1)   -- (4+1,5);
\draw[dotted, black!70] (4+2/3,2) -- (4+2/3,5);
\draw[dotted, black!70] (4+2/4,3) -- (4+2/4,5);
\draw[dotted, black!70] (4+2/5,4) -- (4+2/5,5);

\draw[(-), line width=\gr] (6-1,0) -- (6+1,0);
\draw[(-), line width=\gr] (6-1,1) -- (6+1,1);
\draw[(-), line width=\gr] (6-2/3,2) -- (6+2/3,2);
\draw[(-), line width=\gr] (6-2/4,3) -- (6+2/4,3);
\draw[(-), line width=\gr] (6-2/5,4) -- (6+2/5,4);

\draw[dotted, black!70] (6-1,0)   -- (6-1,5);
\draw[dotted, black!70] (6-1,1)   -- (6-1,5);
\draw[dotted, black!70] (6-2/3,2) -- (6-2/3,5);
\draw[dotted, black!70] (6-2/4,3) -- (6-2/4,5);
\draw[dotted, black!70] (6-2/5,4) -- (6-2/5,5);
\draw[dotted, black!70] (6+1,0)   -- (6+1,5);
\draw[dotted, black!70] (6+1,1)   -- (6+1,5);
\draw[dotted, black!70] (6+2/3,2) -- (6+2/3,5);
\draw[dotted, black!70] (6+2/4,3) -- (6+2/4,5);
\draw[dotted, black!70] (6+2/5,4) -- (6+2/5,5);
\end{tikzpicture}\]

Observe that $A$ and $S_n(A)$ are isomorphic through the isomorphism that sends $A_{(x,t)}$ to
$S_n(A)_{(f_n(x),t+\tau_n-1)} = A_{(x,t)}$ with the identity.
The corresponding morphism to $\alpha$ through the isomorphism just described is noted as
$S_n(\alpha):S_n(A)\to S_n(B)$. 

Define $S(A) = \bigoplus_{n\in \N} S_n(A)$ given by the sum of all layers.
Since the layers get raised, $S(A)$ has only a finite sum at each height.
Note also that since each layer gets squeezed further and further, there is a well
defined morphism $S(\alpha):S(A) \to S(B)$ given by $S_n(\alpha)$ at the layer $n$.
The squeezing, and the fact that $\alpha$ does not have non null coordinates between
intervals, guarantee that this endomorphism satisfies the control condition at
$\infty$.

\begin{defn}\label{squeeze}
    Given an open interval $I\subseteq \R$, let $\OGR_I$ be the subcategory of $\OGR$
    given by the objects which support does not intersect $\partial I \times [1,+\infty)$
    and morphisms of $\OGR$ that restrict to the interval $I$. 

    If also given an strictly increasing unbounded sequence $(\tau_n)_n$
    with $\tau_1 = 1$, the construction just described defines
    functors $S_n:\OGR_I \to \OGR_I$  and $S: \OGR_I \to \OGR_I$.
\end{defn}

Note that these constructions can also be made on the category $\mathcal{O}(I)$
with no action of $\ltr$. In this case, we only consider the interval $I$,
so the functions $f_n$ are defined within $I$.
In this way we also get functors $S_n:\cO(I)\to \cO(I)$ and $S:\cO(I) \to \cO(I)$.

\begin{rem}
We can also define a category 
$\ODR_I$ of objects that restrict to an interval on each copy ${0}\times \R \times [1,\infty)$
and ${1} \times \R \times [1,\infty)$. It is easy to see, using Remark \ref{d-en-z},
that the functors $S$ and $S_n$ also give corresponding 
functors over $\ODR_I$, as the same
construction applies to each copy of $\R \times [1,\infty)$.
\end{rem}

\begin{prop}
    The categories $\OGR_I$ and $\mathcal{O}(I)$ are isomorphic.
\end{prop}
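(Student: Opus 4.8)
The plan is to realize the isomorphism by a pair of mutually inverse functors
\[
F\colon \OGR_I \longrightarrow \cO(I),\qquad E\colon \cO(I)\longrightarrow \OGR_I .
\]
Write $I=(a,a+1)$. Since $t$ acts on $\R$ by translation by $1$, the open intervals $I+k$ ($k\in\Z$) are pairwise disjoint, permuted freely and transitively by $\ltr$, and cover $\R\setminus(\partial I+\Z)$; thus $I$ is a strict fundamental domain for the $\ltr$-action away from the locally finite set $\partial I+\Z$. Consequently, using $\ltr$-invariance and the definition of $\OGR_I$, an object $A$ of $\OGR_I$ has support disjoint from $(\partial I+\Z)\times[1,\infty)$, and a morphism $\varphi$ of $\OGR_I$ is block-diagonal for the decomposition $\R\setminus(\partial I+\Z)=\bigsqcup_k(I+k)$, with all of its diagonal blocks equal. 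I would then set $F(A)=A|_{I\times[1,\infty)}$ and $F(\varphi)=\varphi|_{I\times[1,\infty)}^{I\times[1,\infty)}$, regarded as data over $I\times[1,\infty)$, and I would let $E$ be extension by $\ltr$-periodicity: $E(B)_{(x,t)}=B_{(x-k,t)}$ for $x\in I+k$ and $E(B)_{(x,t)}=0$ for $x\in\partial I+\Z$, and $E(\psi)_{(x,s)}^{(y,t)}=\psi_{(x-k,s)}^{(y-k,t)}$ when $x,y\in I+k$ for a common $k$ and $E(\psi)_{(x,s)}^{(y,t)}=0$ otherwise. By construction $E(B)$ and $E(\psi)$ are $\ltr$-invariant and $E(\psi)$ restricts to $I$; moreover $F\circ E=\id_{\cO(I)}$ on the nose, while $E\circ F=\id_{\OGR_I}$ because $\ltr$-invariance gives $(EFA)_{(x,t)}=A_{(x-k,t)}=A_{(x,t)}$ for $x\in I+k$ (both sides vanishing on $\partial I+\Z$), and similarly for morphisms using $\ltr$-invariance together with block-diagonality.

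The substantive step is that $F$ and $E$ respect the object-support and continuous-control conditions of the two categories; here I take $\cO(I)$ to be the controlled category over $I\times[1,\infty)$ whose objects have support locally finite in $\overline I\times[1,\infty)$ and whose morphisms satisfy continuous control at $\overline I\times\{\infty\}$ (equivalently, the conditions inherited from $\OGR$ along the fundamental domain $I$). For $F$ this is immediate: the support of $A\in\OGR_I$ is locally finite in $\R\times[1,\infty)$ and disjoint from $\partial I\times[1,\infty)$, so its restriction to $I\times[1,\infty)$ is locally finite in $\overline I\times[1,\infty)$, and conditions (1)--(3) defining $\cE_{Gcc}^\R$ are inherited by subsets, in particular by intersection with $(I\times[1,\infty))^2$. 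For $E$ the claim is that the $\ltr$-saturation $\ltr\cdot E'$ of an admissible support $E'\subseteq(I\times[1,\infty))^2$ is an admissible support over $\R$. Conditions (2) and (3) are transparent (the action does not change heights, and $\ltr\cdot E'$ is visibly symmetric and $\ltr$-invariant). For condition (1) one argues pointwise: at an interior point $x_0\in I+k_0$, translating the test neighbourhoods back by $k_0$ and using that $\ltr\cdot E'$ is block-diagonal reduces the condition at $(x_0,\infty)$ to condition (1) for $E'$ at $(x_0-k_0,\infty)\in I\times\{\infty\}$; at a boundary point $x_0\in\partial I+\Z$, one uses that a morphism support in $\cO(I)$ satisfies continuous control at the endpoints of $I$, so that $\ltr\cdot E'$ stays away from $(\partial I+\Z)\times[1,\infty)$ near $(x_0,\infty)$ and condition (1) holds there vacuously. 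The same estimate shows that the $\ltr$-saturation of a support locally finite in $\overline I\times[1,\infty)$ is locally finite in $\R\times[1,\infty)$, so $E$ does carry objects to objects.

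Granting well-definedness, functoriality is routine. $F$ preserves composition because, for morphisms restricting to $I$, the composition formula $(\psi\circ\varphi)|_I^I=\sum_i\psi|_{X_i}^I\circ\varphi|_I^{X_i}$ over the pieces of $\R$ — the fundamental domains $I+k$ and the set $\partial I+\Z$, over which the objects vanish — has only the summand $X_i=I$ nonzero, by block-diagonality and disjointness from $\partial I$; $E$ preserves composition because matrix multiplication is carried out blockwise and the $\ltr$-invariant extension is formed the same way in each block; both functors obviously preserve identities. The main obstacle, and essentially the only step that is not bookkeeping, is the verification for $E$ of condition (1) of $\cE_{Gcc}^\R$ at the boundary points $\partial I+\Z$: this is the only place where the precise form of the control conditions defining $\cO(I)$ is used.
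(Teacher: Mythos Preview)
Your approach is essentially the same as the paper's: both define the restriction functor $F$ and its inverse ``extension by $\ltr$-periodicity'' (the paper calls it $G$), and check that the compositions are the identity. The paper's proof is considerably terser---it simply writes down $F$ and $G$ and asserts that both compositions are identities---whereas you supply the details on well-definedness with respect to the control conditions, in particular the verification of condition~(1) of $\cE_{Gcc}^\R$ at the boundary points $\partial I+\Z$, which the paper passes over in silence.
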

\begin{proof}
 We have a functor $F:\OGR_I \to \mathcal{O}(I)$ given by restriction to $I$ which sends
 each morphism $\phi:A \to B$ that restricts to $I$ to
 \[
    F(\phi)_{(x,t)}^{(y,s)} = \phi_{(x,t)}^{(y,s)} : A_{(x,t)} \to B_{(y,s)}
 \]
 for each $x,y \in I$ and $t,s \in [1,+\infty)$. This functor has an inverse
 $G:\cO(I) \to \OGR_I$ given by repeating the same morphism
 $\psi:M \to N$ over each translation of $I$; for each $x,y \in I$ and $k \in \Z$ set
 \[ 
    G(M)_{(x+k,t)}= M_{(x,t)} \qquad G(N)_{(y+k,s)}= N_{(y,s)} 
    \]
    \[
    G(\psi)_{(x+k,t)}^{(y+k,s)} = \psi_{(x,t)}^{(y,s)}: M_{(x,t)} \to N_{(y,s)}.
 \] 
 It is easily checked that both compositions of $F$ and $G$ give the 
 corresponding identities.
\end{proof}

\begin{lem}\label{lem:K1vanishing}
Set $\tau_n=n$. Then, the functor $S$ makes the category $\mathcal{O}(I)$ flasque,
i.e. there is a natural isomorphism $ S \oplus \id \to S$. In particular,
$K_*(\mathcal{O}(I)) = K_*(\OGR_I)$ is trivial.
\end{lem}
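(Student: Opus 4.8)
The plan is to run an Eilenberg swindle: produce a natural isomorphism $S\oplus\id\xrightarrow{\sim}S$, conclude $K_*(\mathcal{O}(I))=0$ in the usual way, and transport this to $\OGR_I$ via the isomorphism $\OGR_I\cong\mathcal{O}(I)$ established above. The structural input is that $S_1=\id$ (since $f_1=\id$ and $\tau_1=1$), and that, because $\tau_n=n$, below any given height only finitely many layers of $S(A)=\bigoplus_{n\ge1}S_n(A)$ are non-zero, so $S(A)$ is a genuine object of $\mathcal{O}(I)$; moreover, exactly as in the argument before Definition \ref{squeeze} showing $S(\alpha)\in\mathcal{O}(I)$, a morphism between such layered objects whose layerwise components are controlled and which shifts heights between layers by a uniformly bounded amount is again a morphism of $\mathcal{O}(I)$.

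To build the isomorphism $S\oplus\id\xrightarrow{\sim}S$, regard the extra $\id$-summand as a ``layer $0$'' (it equals $S_1=\id$), so that the source is $\bigoplus_{n\ge0}S_n(A)$ and the target is $\bigoplus_{n\ge1}S_n(A)$; the map sends layer $0$ to layer $1$ by the identity of $A$, and, for $n\ge1$, sends the $n$-th source layer to the $(n+1)$-st target layer by the canonical isomorphism that becomes $\id_A$ under the identifications $S_n(A)\cong A\cong S_{n+1}(A)$ recorded in the discussion before Definition \ref{squeeze}. Naturality in $A$ is immediate because $S(\alpha)$ respects the layer decomposition.

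The step I expect to be the main obstacle is checking that this reindexing map actually lies in $\mathcal{O}(I)$---in particular the continuous control condition at infinity. The vertical shift it introduces between consecutive layers is $\tau_{n+1}-\tau_n=1$, hence uniformly bounded, which is precisely the point of the choice $\tau_n=n$. For continuous control at a point $(x_0,\infty)$ one uses the geometry of the squeezing maps: the images $\mathrm{Im}\,f_n$ are nested with intersection the midpoint $c$ of $I$, so for $x_0\ne c$ only finitely many layers are relevant near $x_0$, and for $x_0=c$ the shrinking of $\mathrm{Im}\,f_n$ to $c$ forces the map to become arbitrarily local. This is the same kind of estimate already carried out for $S(\alpha)$, now applied to the reindexing isomorphism.

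Finally, a natural isomorphism $S\oplus\id\cong S$ forces $S_*=S_*+\id_*$ on $K$-theory, so $\id_*=0$ and $K_*(\mathcal{O}(I))=0$; since $\OGR_I\cong\mathcal{O}(I)$, this also gives $K_*(\OGR_I)=0$.
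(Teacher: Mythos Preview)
Your proposal is correct and follows essentially the same approach as the paper: both build the natural isomorphism $S\oplus\id\to S$ by shifting layer $n$ to layer $n{+}1$ via the canonical identification $S_n(A)\cong A\cong S_{n+1}(A)$, with the extra $\id$-summand mapping identically onto $S_1(A)=A$. Your discussion of why the reindexing map satisfies the continuous control condition (bounded vertical shift $\tau_{n+1}-\tau_n=1$, horizontal displacement tending to zero with height) is in fact more explicit than the paper's own proof, which simply writes down $\oplus_n\phi_n$ and leaves these checks implicit.
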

\begin{proof}
 We define the natural transformation as follows: for each $n\geq 1$ define $\phi_n: S_n(A)
 \to S_{n+1}(A)$ given by matrix coordinates $(\phi_n)_{(x,t)}^{(y,s)} = 0$ if
 $f_{n+1}^{-1}(y) \neq f_n^{-1}(x)$ or $s \neq t+1$ and if $f_{n+1}^{-1}(y)=f_n^{-1}(x)$
 and $s= t+1$ since
 \[ S_{n+1}(A)_{(y,s)} = A_{(f_{n+1}^{-1}(y),(t+1)-(n+1)+1)} 
                = A_{(f_n^{-1}(x),t-n+1)} = S_n(A)_{(x,t)}, \]
 we set $(\phi_n)_{(x,t)}^{(y,s)} = id$. Note that this is simply the
 composition of isomorphisms
 $S_n(A) \to A \to S_{n+1}(A)$. Also, put $\phi_0: A \to S_1(A) = A$
 equal to the identity of $A$.
 Putting together all of these morphisms into
 $\oplus_n \phi_n: S(A) \oplus A \to S(A)$, defines the natural
 isomorphism we need.
\end{proof}

Similar statements can be made in the case of the category 
$\ODR$ using two copies of the interval $I$.

Given a geometric module $A$ in $\OGR$ or $\ODR$,
there are two splittings of $S(A)$. One is given by $S(A)=\So(A) \oplus
\Se(A)$, where $\So(A)$ and $\Se(A)$ are the sum over the odd and even
layers respectively. The second splitting is $S(A) = A \oplus S_{+}(A)$ where
$S_{+}(A)$ are all the layers in $S(A)$ omitting the first one.  We also have a
splitting $S_{+}(A) = \Se(A) \oplus \Sop(A)$ where $\Sop(A)$ is just
$\So(A)$ with the first layer removed. 

The geometric modules $\So(A)$, $\Se(A)$ and $\Sop(A)$ are isomorphic
through the isomorphism which identifies each layer with the next one (the order is
important!). We note these isomorphisms
\begin{align*}
   \psioe&:\So(A) \to \Se(A) \text{ and} \\
   \psieo&:\Se(A) \to \Sop(A).
\end{align*}

Given a morphism $\alpha:A\to B$ in $\OGR$ or $\ODR$ we define
\begin{align*}
\So(\alpha)&:\So(A) \to \So(B), \\
\Se(\alpha)&:\Se(A) \to \Se(B) \text{ and}\\
\Sop(\alpha)&:\Sop(A) \to \Sop(B)
\end{align*}
given by $S_n(\alpha)$ at the corresponding layers.
Observe that since each of the morphisms are defined layer-wise, we have that
\begin{align*}
    \psioe^{-1}\Se(\alpha)\psioe &= \So(\alpha) \text{ and} \\
    \psieo^{-1}\Sop(\alpha)\psieo &= \Se(\alpha).
\end{align*}  

\begin{rem}Let $\eta:A\to A$ be a morphism in $\OGR$. Then
the horizontal size of $\eta$ is given by the formula
\[\hsi(\eta)=\sup \left\{|x-y| : \eta_{(x,t)}^{(y,s)}\ne 0\right\}.\]
In the case $\eta$ is in $\ODR$, the horizontal size is given by
\[\hsi(\eta)=\sup \left\{|x-y| : \eta_{(\epsilon,x,t)}^{(\delta,y,s)}\ne 0\right\}.\]
\end{rem}

\subsection{Vanishing theorem}

\begin{lem}\label{lem:identity}
Fix the interval $I = (0,1)$ and a sequence $\tau_n$ as in
\ref{squeeze} for the construction of the functors $S_n$.
Consider the following subspaces of $(0,1)\times [1, +\infty)$:
\[U=\left[\tfrac13,\tfrac23\right]\times[1,+\infty)\]
\[V=\bigcup_{n\geq 1}\left[\tfrac12-\tfrac{1}{6n},\tfrac12+\tfrac{1}{6n}\right]\times[\tau_n,\tau_{n+1}).\]
If $\gamma:A\to A$ is an endomorphism in $\cO(I)$ which is the identity on $U$ then
$\S_n(\gamma)$ is the identity on $V$ for all $n$.
\end{lem}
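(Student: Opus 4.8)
The plan is to unwind the two ``identity on a subspace'' conditions into assertions about matrix coordinates, and then reduce the whole statement to one elementary estimate describing where the squeeze carries the window $V$.

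First I would record that ``$S_n(\gamma)$ is the identity on $V$'' amounts to three assertions: $S_n(\gamma)_{(x,t)}^{(y,s)}=\id$ whenever $(x,t)=(y,s)\in V$; $S_n(\gamma)_{(x,t)}^{(y,s)}=0$ whenever $(x,t)$ and $(y,s)$ are distinct points of $V$; and $S_n(\gamma)_{(x,t)}^{(y,s)}=0$ whenever exactly one of $(x,t),(y,s)$ lies in $V$. The hypothesis on $\gamma$ gives the identical three assertions with $U$ replacing $V$. Next I would recall from the construction of the functor that $S_n(\gamma)_{(x,t)}^{(y,s)}=\gamma_{(f_n^{-1}(x),\,t+1-\tau_n)}^{(f_n^{-1}(y),\,s+1-\tau_n)}$ whenever $x,y\in\mathrm{Im}\,f_n$ and $t,s\geq\tau_n$, and is $0$ otherwise; in particular every coordinate of $S_n(\gamma)$ with vanishing source or target module is automatically $0$, which already disposes of the part of $V$ lying below height $\tau_n$ (the slabs $[\tfrac12-\tfrac1{6m},\tfrac12+\tfrac1{6m}]\times[\tau_m,\tau_{m+1})$ with $m<n$, where $S_n(A)$ is zero).

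The heart of the argument is the estimate: if $(x,t)\in V$ with $x\in\mathrm{Im}\,f_n$ and $t\geq\tau_n$, then $(f_n^{-1}(x),\,t+1-\tau_n)\in U$. I would prove this by supposing $(x,t)$ lies in the $m$-th slab of $V$; since $(\tau_k)$ is strictly increasing and $\tau_n\leq t<\tau_{m+1}$, necessarily $m\geq n$, so $\tfrac1{6m}\leq\tfrac1{6n}<\tfrac1{2n}$ (hence $x\in\mathrm{Im}\,f_n$ is in fact automatic). On $I=(0,1)$ the function $f_n$ is the affine map $u\mapsto\tfrac12-\tfrac1{2n}+\tfrac un$, so $f_n^{-1}(x)=n(x-\tfrac12)+\tfrac12$, and from $|x-\tfrac12|\leq\tfrac1{6m}\leq\tfrac1{6n}$ one gets $|f_n^{-1}(x)-\tfrac12|\leq\tfrac n{6m}\leq\tfrac16$, i.e.\ $f_n^{-1}(x)\in[\tfrac13,\tfrac23]$; and $t\geq\tau_n$ gives $t+1-\tau_n\geq1$. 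Thus the pair lies in $[\tfrac13,\tfrac23]\times[1,+\infty)=U$.

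With the estimate available I would conclude by verifying the three coordinate assertions at once. Fix $S_n(\gamma)_{(x,t)}^{(y,s)}$ with $(x,t)\in V$ or $(y,s)\in V$, assuming (as we may) that both the source and target modules are nonzero. Then the coordinate equals $\gamma_{(x',t')}^{(y',s')}$ with $(x',t')=(f_n^{-1}(x),t+1-\tau_n)$ and $(y',s')=(f_n^{-1}(y),s+1-\tau_n)$, and $(x',t')=(y',s')$ exactly when $(x,t)=(y,s)$ because $u\mapsto f_n^{-1}(u)$ and the height shift are bijective. By the estimate, whichever of $(x,t),(y,s)$ lies in $V$ is carried into $U$. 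If $(x,t)=(y,s)\in V$ then $(x',t')=(y',s')\in U$ and, $\gamma$ being the identity on $U$, the coordinate is $\id$. Otherwise $(x',t')\neq(y',s')$ and at least one endpoint lies in $U$, and every such coordinate of $\gamma$ vanishes (it is an off-diagonal entry of $\gamma|_U^U$, or an entry of $\gamma|_U^{U^c}$ or $\gamma|_{U^c}^U$, all zero since $\gamma$ is the identity on $U$). This gives all three assertions. I expect no real obstacle here: the substance is the elementary estimate above, and the only thing requiring care is the bookkeeping, namely keeping the three ``identity on $V$'' conditions straight and separating the trivial slabs $m<n$ (where $S_n(A)=0$) from the slabs $m\geq n$ (which the squeeze compresses into $U$).
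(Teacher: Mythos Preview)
Your proof is correct and follows essentially the same idea as the paper: both rely on the elementary computation that $f_n^{-1}$ carries $[\tfrac12-\tfrac1{6n},\tfrac12+\tfrac1{6n}]$ into $[\tfrac13,\tfrac23]$, so that coordinates of $S_n(\gamma)$ at points of $V$ (above height $\tau_n$) pull back to coordinates of $\gamma$ at points of $U$. The only difference is packaging: the paper introduces the intermediate set $S_n(U):=\bigl((0,1)\times[1,\tau_n)\bigr)\cup\bigl([\tfrac12-\tfrac1{6n},\tfrac12+\tfrac1{6n}]\times[1,+\infty)\bigr)$, observes that $V=\bigcap_n S_n(U)$, and shows in one stroke that $S_n(\gamma)$ is the identity on all of $S_n(U)$ (the first piece being where $S_n(A)$ vanishes, the second being exactly the $f_n$-preimage of $U$ shifted in height), whence on $V\subseteq S_n(U)$; you instead work directly with $V$, split into the slabs $m<n$ (trivial) and $m\geq n$ (the estimate), and check the three coordinate conditions by hand.
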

\begin{proof}
For each $n\geq 1$ put:
\[\S_n(U):=\left((0,1)\times [1, \tau_n)\right)\cup\left(\left[\tfrac12-\tfrac{1}{6n},\tfrac12+\tfrac{1}{6n}\right]\times [1,+\infty)\right)\]
Then $U=\S_1(U)$ and $V=\cap_n\S_n(U)$. Note that $f_n$ is linear and increasing on $(0,1)$, and that
\[f_n:(0,1)\to \left(\tfrac12-\tfrac{1}{2n},\tfrac12+\tfrac{1}{2n}\right)\]
is bijective. Then, for $x\in\Im f_n$ and $t\geq \tau_n$, $(x,t)\in \S_n(U)$ if and only if $(f_n^{-1}(x), t-\tau_n+1)\in U$. It follows easily from the latter that $\S_n(\gamma)$ is the identity on $\S_n(U)$, and thus on $V\subseteq \S_n(U)$.
\end{proof}

\begin{thm}[{c.f. \cite{ped}*{Theorem 3.6}}]\label{chico2}
Let $\alpha:A \to A$ be an $\frac{1}{30}$-auto\-morphism in $\OGR$.
Then there is an automorphism $\beta:B \to B$ in $\OGR$ with $[\alpha] = [\beta]$ in
$K_1(\OGR)$ such that $\beta$ restricts to $\left(\frac12,\frac32\right)$.
The same is true with $\ODR$ instead of $\OGR$.

\end{thm}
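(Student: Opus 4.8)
The plan is to produce, by a $K_1$-equivalence, an automorphism that has been cut along the translates of the walls $\tfrac12+\Z$, so that it restricts to $(\tfrac12,\tfrac32)$. Fix a small $\eta>0$ (how small will be governed by the estimates below, but keep $2\eta>\tfrac1{30}$ throughout) and set
\[
W=\bigcup_{k\in\Z}\bigl(k+\tfrac12-\eta,\;k+\tfrac12+\eta\bigr)\times[1,\infty)\ \subseteq\ \R\times[1,\infty),
\]
a $\ltr$-invariant open neighbourhood of the walls. Suppose we have produced $\alpha_1:A\to A$ in $\OGR$ with $[\alpha_1]=[\alpha]$ in $K_1(\OGR)$, with $\hsi(\alpha_1)<2\eta$, and which is the identity on $W$. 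With respect to $A=A|_W\oplus A|_{W^c}$ this means $\alpha_1=\id_{A|_W}\oplus\beta$ with $\beta:=\alpha_1|_{W^c}^{W^c}$ an automorphism of $A|_{W^c}$ and $[\beta]=[\alpha_1]=[\alpha]$. Now $W^c=\bigcup_{k\in\Z}[k+\tfrac12+\eta,\,k+\tfrac32-\eta]\times[1,\infty)$ is a disjoint union of closed slabs of horizontal width $1-2\eta$, consecutive ones separated by a horizontal gap of length $2\eta>\hsi(\beta)$, so $\beta$ has no components between distinct slabs of $W^c$. Since $\tfrac12,\tfrac32\notin[k+\tfrac12+\eta,k+\tfrac32-\eta]$ for every $k$, and $[\tfrac12+\eta,\tfrac32-\eta]$ is the unique slab of $W^c$ lying over $(\tfrac12,\tfrac32)$ while all the others lie over its complement, $\supp(A|_{W^c})$ misses $\{\tfrac12,\tfrac32\}\times[1,\infty)$ and $\beta$ has no components between $(\tfrac12,\tfrac32)\times[1,\infty)$ and its complement; that is, $\beta$ restricts to $(\tfrac12,\tfrac32)$, as desired.

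So everything reduces to producing $\alpha_1$, and here I would adapt the handle-cancellation argument of \cite{ped}*{Theorem 3.6}. The walls $\tfrac12+\Z$ form a single $\ltr$-orbit, so geometric modules supported over $W$ — with morphisms of bounded horizontal size — behave coarsely like modules over a point, and the corresponding category carries an Eilenberg swindle using the shift $x\mapsto x+1$ in the $[1,\infty)$-direction, just as $\cO^G(\pt)$ does in Section~\ref{sec:generalset}; crucially, this swindle moves nothing in the $\R$-direction. One then stabilizes $\alpha$ by the identity on a large vertical sum of copies of $A|_W$ produced by that swindle, and multiplies the stabilization by an automorphism $\rho$ that is supported over $W$, is $K_1$-trivial (being supported over this flasque region), and is assembled from pieces of horizontal size comparable to $\hsi(\alpha)$ — not to the width of a slab of $W$ — chosen so that the product is the identity on a slightly thinner neighbourhood of the walls. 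That product is then a legitimate choice of $\alpha_1$: its horizontal size is at most $\hsi(\alpha)+\hsi(\rho)$, which is $O(\tfrac1{30})$ and hence below $2\eta$ for a suitable $\eta$ of the same order of magnitude, while all the modules and morphisms introduced have bounded horizontal and vertical size and so automatically satisfy the continuous control condition at infinity, exactly as for the squeezing constructions of Section~\ref{sec:vanishing}. The numerical value $\tfrac1{30}$ is exactly what is spent to make these estimates close: the width $2\eta$ of $W$, the gap $2\eta$ between the slabs of $W^c$, and the horizontal size that $\alpha_1$ acquires after the cancellation must all be mutually compatible.

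For $\ODR$ the same argument applies on the resolution $\Z/2\times\R$: by Remark~\ref{d-en-z} an object there is a pair of objects over the two copies of $\R\times[1,\infty)$, with the $\Dinf$-action interchanging and translating the copies, and $W$ can be chosen $\Dinf$-invariantly (a neighbourhood of the walls on each copy), so the cutting and the handle-cancellation proceed copy by copy and $\Dinf$-equivariantly. I expect the delicate point to be the handle-cancellation producing $\alpha_1$ — specifically, checking that the change of basis $\rho$ near the walls can be realized with horizontal size comparable to $\hsi(\alpha)$ rather than to the macroscopic width of a slab — since it is precisely this that makes the cut compatible with the stated constant $\tfrac1{30}$.
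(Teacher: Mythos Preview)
Your reduction in the first paragraph is fine, but the mechanism you propose for producing $\alpha_1$ has a genuine gap in two places.

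First, the vertical shift does \emph{not} give an Eilenberg swindle on the subcategory of $\OGR$ supported over $W$, because the functor $S$ it would define does not land in $\OGR$. If $\varphi$ has horizontal size $\varepsilon>0$, then at every height $t$ the sum $\bigoplus_{n}\mathrm{shift}_n(\varphi)$ contains a summand which is $\varphi$ near height $1$, hence of horizontal size $\approx\varepsilon$; so the horizontal size of $S(\varphi)$ does not tend to $0$ as $t\to\infty$ and $S(\varphi)$ fails the continuous control condition. The swindle on $\cO^G(\pt)$ works precisely because there is no $X$-direction; over $W$ there is one, and it cannot be ignored. This is not a technicality but the reason the paper's swindle (Definition~\ref{squeeze}) squeezes \emph{horizontally} toward the midpoints rather than shifting vertically: squeezing forces layer $n$ to have horizontal size $\leq\tfrac{1}{n}\hsi(\alpha)$, so the infinite sum does satisfy continuous control.

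Second, even granting flasqueness, you have not said what $\rho$ is. Knowing that automorphisms supported over $W$ are $K_1$-trivial does not produce a specific $\rho$ making $\rho\circ(\alpha\oplus\id)$ the identity on a prescribed region --- that implication goes the wrong way. In the paper this step is the entire content of the proof: one cuts $\alpha$ along the \emph{integers} (not the half-integers) to get $\balpha,\balpham$, which satisfy $\balpha\,\balpham=\id$ on the middle third $U=[\tfrac13,\tfrac23]\times[1,\infty)$; one then writes down explicit products $\eta,\mu$ of six elementary matrices each (built from $\So(\balphapm),\Se(\balphapm)$ and the layer-shifting isomorphisms $\psioe,\psieo$) and sets $\beta=\eta\circ(\alpha\oplus\mu)$. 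A direct computation of the matrix entries $\beta_{ij}$ shows that $\beta$ is the identity on the \emph{shrinking} region $V$ of Lemma~\ref{lem:identity}, and a size count (at height $\tau_n$ every relevant factor has horizontal size $<\tfrac{1}{30n}$, and products of at most five such cannot cross a gap of width $\tfrac{1}{6n}$) shows $\beta$ has no component across $\{\tfrac12\}\times[1,\infty)$. The constant $\tfrac{1}{30}$ comes from this five-factor estimate, not from the geometry of $W$. So your closing sentence is exactly right --- the handle-cancellation is the delicate point --- but that point is essentially the whole proof, and the vertical swindle you sketch cannot supply it.
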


\begin{proof}
We may assume $A$ does not have modules supported at points whose first coordinates are integers or half-integers; if this were not the case, we could replace $A$ by an isomorphic module obtained by slightly shifting $A$. 

Let $\balpha, \balpham:A \to A$ be the morphisms in $\OGR$ defined by
\[ \left(\balphapm\right)_{(x,t)}^{(y,s)} = \begin{cases} \left(\alpha^{\pm 1}\right)_{(x,t)}^{(y,s)} &\text{if } x,y \in [k,k+1] \text{ for some } k\in \Z,\\
0 & \text{otherwise.}
\end{cases}\]
By construction, $\balpha$ and $\balpham$ restrict to the interval $(0,1)$ --- that is, they are morphisms in $\OGR_{(0,1)}$.

We now choose the sequence $(\tau_n)_{n\geq 1}$ for constructing the functors $\S_n$. Fix $K>0$ so that $\left(\alpha^{\pm 1}\right)_{(x,t)}^{(y,s)}=0$ if $\lvert t-s\rvert>K$. By control at $+\infty$, we may choose a strictly increasing sequence $(\tau_n)_{n\geq 1}$ so that the following holds for all $n$:

\emph{Whenever $s,t\geq \tau_n-5K$ and $\lvert x-y\rvert > \frac{1}{30n}$, $\left(\alpha^{\pm 1}\right)_{(x,t)}^{(y,s)}=0=S_j\left(\balphapm\right)_{(x,t)}^{(y,s)}$ for all $1<j<n$.}
\medskip

Denote endomorphisms of $\S(A) = \So(A) \oplus \Se(A)$ as $2\times 2$ matrices and define $\eta: \So(A) \oplus \Se(A) \to \So(A) \oplus \Se(A)$ as the product:
\begin{align*}
\eta = 
&\begin{pmatrix}
    \id & \psioe^{-1}  \\
    0 & \id
\end{pmatrix}
\begin{pmatrix}
    \id & 0 \\
   -\psioe & \id
\end{pmatrix}
\begin{pmatrix}
    \id & \psioe^{-1} \\
    0 & \id
\end{pmatrix} 
\\
&\begin{pmatrix}
    \id & 0 \\
  \psioe\circ\So\left(\balpham\right) & \id
\end{pmatrix}
\begin{pmatrix}
    \id & -\psioe^{-1}\circ\Se\left(\balpha\right) \\
    0 & \id
\end{pmatrix}
\begin{pmatrix}
    \id & 0 \\
    \psioe\circ\So\left(\balpham\right) & \id
\end{pmatrix}
\end{align*}
Define $\mu:\Se(A) \oplus \Sop(A)\to \Se(A) \oplus \Sop(A)$ by: 
\begin{align*}
\mu = 
&\begin{pmatrix}
    \id & \psieo^{-1} \\
    0 & \id
\end{pmatrix}
\begin{pmatrix}
    \id & 0 \\
   -\psieo & \id
\end{pmatrix}
\begin{pmatrix}
    \id & \psieo^{-1} \\
    0 & \id
\end{pmatrix} 
\\
&\begin{pmatrix}
    \id & 0 \\
  \psieo\circ\Se\left(\balpham\right) & \id
\end{pmatrix}
\begin{pmatrix}
    \id & -\psieo^{-1}\circ\Sop\left(\balpha\right) \\
    0 & \id
\end{pmatrix}
\begin{pmatrix}
    \id & 0 \\
    \psieo\circ\Se\left(\balpham\right) & \id
\end{pmatrix}
\end{align*}
Note that $\eta$ and $\mu$ are automorphisms since they are products of elementary matrices. We also have the followin description:
\[\eta=\begin{pmatrix}
    2\So\left(\balpham\right) - \So\left(\balpham\circ\balpha\circ\balpham\right) & -\psioe^{-1}\circ\Se\left(\balpham\circ\balpha\right)+\psioe^{-1} \\
    -\psioe+\psioe\circ\So\left(\balpha\circ\balpham\right) &
    \Se\left(\balpha\right)
\end{pmatrix}
\]
\[\mu=\begin{pmatrix}
    2\Se\left(\balpham\right) - \Se\left(\balpham\circ\balpha\circ\balpham\right) & -\psieo^{-1}\circ\Sop\left(\balpham\circ\balpha\right)+\psieo^{-1} \\
    -\psieo+\psieo\circ\Se\left(\balpha\circ\balpham\right) &
    \Sop\left(\balpha\right)
\end{pmatrix}
\]

Let $\beta:= \eta\circ(\alpha \oplus \mu)$. Since both $\eta$ and $\mu$ are products of elementary matrices, $[\alpha]=[\beta]$ in $K_1(\OGR)$. We will show that $\beta$ restricts to $(\frac{1}{2},\frac{3}{2})$. In order to obtain a more explicit description of $\beta$, we can consider the matrix representations of $\alpha\oplus\mu$ and $\eta$ with respect to the decomposition
\begin{equation}\label{eq:decompn}\S(A)=\S_1(A)\oplus \S_2(A)\oplus \S_3(A)\oplus \cdots\end{equation}
and then multiply these matrices. This tedious but straightforward computation shows that the matrix representation $\beta=(\beta_{ij})_{i,j\geq 1}$ with respect to \eqref{eq:decompn} can be described as follows. Define:
\begin{align*}
    \gamma_j&:=-3\S_j\left(\balpham\circ\balpha\circ\balpham\right)+2\S_j\left(\balpham\right)+\S_j\left(\balpham\circ\balpha\circ\balpham\circ\balpha\circ\balpham\right) \\
    \delta_j&:=2\S_j\left(\balpha\circ\balpham\right)-\S_j\left(\balpha\circ\balpham\circ\balpha\circ\balpham\right)\\
    \kappa_j&:=\id-2\S_j\left(\balpha\circ\balpham\right)+\S_j\left(\balpha\circ\balpham\circ\balpha\circ\balpham\right)\\
    \rho_j&:=-\S_j\left(\balpha\circ\balpham\circ\balpha\right)+\S_j\left(\balpha\right)
\end{align*}
Let $\tilde{\gamma}_j$ (respectively $\tilde{\delta}_j$) be defined by the formula of $\gamma_j$ (resp. $\delta_j$) with $\balpha$ and $\balpham$ exchanged. Then we have
\begin{equation}\label{eq:beta1}
    \beta_{i1}=\left\{\begin{array}{cl}
        2\balpham\circ\alpha-\balpham\circ\balpha\circ\balpham\circ\alpha & \mbox{if $i=1$,} \\
        -\psi_{12}\circ\alpha+\psi_{12}\circ\balpha\circ\balpham\circ\alpha & \mbox{if $i=2$,}\\
        0 & \mbox{otherwise.}
    \end{array}\right.\end{equation}
For even $j$, we have that
\begin{equation}\label{eq:beta2}\beta_{ij}=\left\{\begin{array}{cl}
        \psi_{j-1,j}^{-1}\circ \gamma_j & \mbox{if $i=j-1$,} \\
        \delta_j & \mbox{if $i=j$,}\\
        -\psi_{j, j+1}\circ \gamma_j & \mbox{if $i=j+1$,}\\
        \psi_{j+1, j+2}\circ\psi_{j, j+1}\circ \kappa_j & \mbox{if $i=j+2$,}\\
        0 & \mbox{otherwise.}
    \end{array}\right.
\end{equation}
For odd $j>1$, we have that
\begin{equation}\label{eq:beta3}\beta_{ij}=\left\{\begin{array}{cl}
        \psi_{j-2,j-1}^{-1}\circ\psi_{j-1, j}^{-1}\circ \tilde{\kappa}_j & \mbox{if $i=j-2$,} \\
        \psi_{j-1,j}^{-1}\circ\rho_j & \mbox{if $i=j-1$,}\\
        \tilde{\delta}_j & \mbox{if $i=j$,}\\
        -\psi_{j, j+1}\circ \rho_j & \mbox{if $i=j+1$,}\\
        0 & \mbox{otherwise.}
    \end{array}\right.
\end{equation}
It follows easily from the latter description of $\beta$, that it is the identity on the subspace $V$ of Lemma \ref{lem:identity}. For example, suppose that we want to prove that $\beta|_V^V=\id$ --- proving that $\beta|_V^{V^c}=0$ or $\beta|_{V^c}^V=0$ is done in a similar way. To show that the even columns of $\left(\beta_{ij}|_V^V\right)_{i,j\geq 1}$ are those of the identity, it suffices to show that $\delta_j|_V^V=\id$ and that both $\gamma_j$ and $\kappa_j$ are zero on $V$. Even though $\balpha$ and $\balpham$ aren't necessarily automorphisms, $\balpha \circ \balpham$ and $\balpham\circ\balpha$ are
the identity on the subspace $U$ of Lemma \ref{lem:identity} since $\hsi\left(\alpha^{\pm 1}\right)<\frac{1}{30}$. By Lemma \ref{lem:identity}, $\S_j(\balpha \circ \balpham)$ and $\S_j(\balpham \circ \balpha)$ are the identity on $V$ for all $j$. Thus, we have:
\begin{align*}
    \delta_j|_V^V&=2\left.\S_j\left(\balpha\circ\balpham\right)\right|_V^V-\left.\left[\S_j\left(\balpha\circ\balpham\right)\circ\S_j\left(\balpha\circ\balpham\right)\right]\right|_V^V\\
    &=2\left.\S_j\left(\balpha\circ\balpham\right)\right|_V^V-\left.\S_j\left(\balpha\circ\balpham\right)\right|_V^V\\
    &=\left.\S_j\left(\balpha\circ\balpham\right)\right|_V^V=\id
\end{align*}
For $W=V$ or $W=V^c$ we have:
\begin{align*}
    \gamma_j|_V^W&=-3\left.\left[\S_j\left(\balpham\right)\circ\S_j\left(\balpha\circ\balpham\right)\right]\right|_V^W+2\left.\S_j\left(\balpham\right)\right|_V^W \\
    &+\left.\left[\S_j\left(\balpham\circ\balpha\circ\balpham\right)\circ\S_j\left(\balpha\circ\balpham\right)\right]\right|_V^W \\
    &=-3\left.\S_j\left(\balpham\right)\right|_V^W+2\left.\S_j\left(\balpham\right)\right|_V^W+\left.\S_j\left(\balpham\circ\balpha\circ\balpham\right)\right|_V^W \\
    &=-3\left.\S_j\left(\balpham\right)\right|_V^W+2\left.\S_j\left(\balpham\right)\right|_V^W+\left.\S_j\left(\balpham\right)\right|_V^W=0
\end{align*}
To show that $\gamma_j|_{V^c}^V=0$ one proceeds in a similar way.

\medskip
Put $B:= \left(\frac12,\frac32\right)\times [1, +\infty)$. Let $(x,t)\in B$ and let $(y,s)\not\in B$. We will now show that $\beta_{(x,t)}^{(y,s)}=0$. We have the following two possibilities:
\begin{enumerate}
    \item $(x,t)\in V$.
    \begin{itemize}
        \item If $(y,s)\not\in V$, then $\beta_{(x,t)}^{(y,s)}=0$ because $\beta|_V^{V^c}=0$.
        \item If $(y,s)\in V$, then $\beta_{(x,t)}^{(y,s)}=0$ unless $(x,t)=(y,s)$, because $\beta|_V^V=\id$. But $(x,t)\ne(y,s)$ since we are assuming $(x,t)\in B$ and $(y,s)\not\in B$. Hence $\beta_{(x,t)}^{(y,s)}=0$.
    \end{itemize}
    \item $(x,t)\not\in V$.
    Pick $n\geq 1$ such that $t\in [\tau_n, \tau_{n+1})$. Then
    \[\S(A)_{(x,t)}=\bigoplus_{j=1}^n\S_j(A)_{(x,t)}.\]
    Thus, $\beta_{(x,t)}^{(y,s)}=0$ if and only if $\left(\beta_{ij}\right)_{(x,t)}^{(y,s)}=0$ for all $1\leq j\leq n$ and all $i\geq 1$. Fix $1\leq j\leq n$ and $i\geq 1$. We proceed to show that $\left(\beta_{ij}\right)_{(x,t)}^{(y,s)}=0$. By \eqref{eq:beta1}, \eqref{eq:beta2} and \eqref{eq:beta3}, we know that $\beta_{ij}$ is the composite of an endomorphism $\epsilon:\S_j(A)\to \S_j(A)$ followed by the usual isomorphism $\iota:\S_j(A)\overset{\cong}\to\S_i(A)$. We have that
    \[\left(\beta_{ij}\right)_{(x,t)}^{(y,s)}=\iota_{(z,u)}^{(y,s)}\circ \epsilon_{(x,t)}^{(z,u)},\]
    where $(z,u)$ is the unique point such that $\iota_{(z,u)}^{(y,s)}\ne 0$. It is clear that $(z,u)\not\in B$ since $(y,s)\not\in B$. We will show that $\epsilon_{(x,t)}^{(z,u)}=0$. It follows from \eqref{eq:beta1}, \eqref{eq:beta2} and \eqref{eq:beta3} that $\epsilon$ is a sum of terms, each of which may be the identity or the composite of at most five factors of the form $\S_j\left(\balphapm\right)$---in the case $j=1$ we also have the factor $\alpha$; see \eqref{eq:beta1}. Let $\sigma$ be one of the terms appearing in $\epsilon$. If $\sigma=\id$, then $\sigma_{(x,t)}^{(z,u)}=0$ since $(x,t)\ne (z,u)$. Suppose $\sigma$ is a composite of factors of the form $\S_j\left(\balphapm\right)$, say
    \[\sigma=\varphi_r\circ\cdots \circ\varphi_1\]
    with $1\leq r\leq 5$. If we write $(x_0, t_0):=(x,t)$ and $(x_r,t_r):=(z,u)$, we have:
    \begin{equation}\label{eq:tau}\sigma_{(x,t)}^{(z,u)}=\sum \left(\varphi_r\right)_{(x_{r-1},t_{r-1})}^{(x_r, t_r)}\circ \cdots \circ \left(\varphi_1\right)_{(x_0,t_0)}^{(x_1, t_1)}\end{equation}
    All the points $(x_k,t_k)$ that appear in this sum have $t_k\geq \tau_n-5K$; indeed, this follows from the facts that $t_0=t\geq \tau_n$ and $\vsi(\varphi_k)\leq K$ for all $k$. Since $(x,t)\in V^c\cap B$ and $t\in[\tau_n,\tau_{n+1})$, we have that $x\in\left(\frac12+\frac{1}{6n},\frac32-\frac{1}{6n}\right)$. Since $(z,u)\not\in B$, the latter implies that $|x-z|>\frac{1}{6n}$. Thus, for each term in the sum \eqref{eq:tau} there exists $k$ such that $|x_k-x_{k-1}|>\frac{1}{30n}$. Then each term in \eqref{eq:tau} is zero --- this follows, for $j<n$, from the condition which was
    requirement for the construction of the sequence $\tau_n$ and, for 
    $j=n$, from the fact that $\hsi\left[\S_n\left(\balphapm\right)\right]\leq \frac{1}{30n}$.
    This finishes the proof that $\left(\beta_{ij}\right)_{(x,t)}^{(y,s)}=0$.
\end{enumerate}

In the case of the category $\ODR$, the proof works exactly the same.
The control condition noted in Remark \ref{d-en-z} is crucial for
imitating the same proof.
\end{proof}
As an immediate corollary we get the following theorem.
\begin{thm}\label{vani}
 Let $\alpha:A \to A$ be an $\frac{1}{30}$-automorphism in $\OGR$ or in $\ODR$. Then $\alpha$ has trivial class in $K_1$.
\end{thm}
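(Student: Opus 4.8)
The plan is to obtain this as an immediate consequence of Theorem \ref{chico2} together with the flasqueness established in Lemma \ref{lem:K1vanishing}. Given a $\tfrac1{30}$-automorphism $\alpha\colon A\to A$ in $\OGR$, Theorem \ref{chico2} produces an automorphism $\beta\colon B\to B$ in $\OGR$ with $[\alpha]=[\beta]$ in $K_1(\OGR)$ and such that $\beta$ restricts to the interval $I=\left(\tfrac12,\tfrac32\right)$. By $\langle t\rangle$-equivariance, $\beta$ in fact restricts to every translate $I+k$, so with respect to the decomposition of $\R$ into the translates of $I$ (the support of $B$ avoids the half-integers, as arranged in the proof of Theorem \ref{chico2}) the morphism $\beta$ is block diagonal; hence $\beta^{-1}$ is block diagonal as well and also restricts to $I$. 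Thus $\beta$ is an automorphism of the subcategory $\OGR_I$ of Definition \ref{squeeze}, and it carries a well-defined class $[\beta]\in K_1(\OGR_I)$.

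Next I would observe that Lemma \ref{lem:K1vanishing}, although stated for $I=(0,1)$, applies verbatim with $I=\left(\tfrac12,\tfrac32\right)$: the functions $f_n$ used to build the squeezing functor $S$ are $\langle t\rangle$-equivariant and their definition depends only on the choice of a length-$1$ interval, so $S$ still exhibits $\OGR_I$ as flasque and $K_1(\OGR_I)=0$. Therefore $[\beta]=0$ in $K_1(\OGR_I)$. Since the inclusion $\OGR_I\hookrightarrow\OGR$ is an exact functor fixing $\beta$, it sends this class to $[\beta]\in K_1(\OGR)$, whence $[\alpha]=[\beta]=0$. The case of $\ODR$ is handled identically, using the $\ODR$-versions of Theorem \ref{chico2} and of Lemma \ref{lem:K1vanishing} pointed out in the remarks following those statements; there one simply runs the same squeezing on each of the two sheets of $\overline{\R}$, and the control condition recorded in Remark \ref{d-en-z} (no constraint on the distance between the two copies) is what makes the argument go through unchanged.

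I do not expect a genuine obstacle in this step: the substantive content — the swindle furnishing the flasque structure, and the delicate choice of heights $\tau_n$ in Theorem \ref{chico2} that forces the $K_1$-equivalent automorphism $\beta$ to be the identity on a neighbourhood of the midpoints — has already been carried out. The only points that deserve an explicit word are the two bookkeeping remarks above: that $\beta^{-1}$ also restricts to $I$ (so that $[\beta]$ genuinely lives in $K_1(\OGR_I)$), and that the flasqueness of Lemma \ref{lem:K1vanishing} is insensitive to translating the length-$1$ interval.
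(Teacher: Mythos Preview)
Your proposal is correct and follows exactly the paper's approach: combine Theorem~\ref{chico2} with Lemma~\ref{lem:K1vanishing}. The extra bookkeeping you spell out (that $\beta^{-1}$ also restricts to $I$, that the flasqueness is insensitive to translating the length-$1$ interval, and that the inclusion $\OGR_I\hookrightarrow\OGR$ is exact) is left implicit in the paper's one-line proof but is entirely accurate.
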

\begin{proof}
Use the previous proposition together with \ref{lem:K1vanishing}.
\end{proof}

\section{Small matrices in \texorpdfstring{$K_1(R[t,t^{-1}])$}{K1(R[t,t-1])}}\label{sec:squeezing}
Let $\epsilon > 0$. In this section we show that the class of $t$ in $K_1(R[t,t^{-1}])$ can be represented by an $\epsilon$-automorphism. When
$R$ is a regular ring, it follows from this and the proof of \cite{bhs}*{Theorem 2} that any element in $K_1(R[t,t^{-1}])$ has a representative which is an $\epsilon$-automorphism; see \eqref{cuadradito}.

Recall that an element in $K_1(R[t,t^{-1}])$ is the class of an invertible matrix with coefficients in $R[t,t^{-1}]$. We begin by revisiting the notion of \emph{size} in the context of matrices.

Let $R$ be a ring (not neccessarily regular) and $A$  a matrix in $M_n(R[t,t^{-1}])$, then $A$ can be written as
 \[ A=\sum^{i=m}_{i=-m}A_{i}t^{i} \qquad \text{for } A_{i}\in M_n(R).\]
In order to identify $A$ with a morphism in $\widetilde{\cT}^{\ltr}(\R)$ we need to fix a geometric $R$-module $M=(M_x)_{x\in\R}$. Put
\begin{gather*}
M_{x}=\begin{cases*}
        R &if $x\in \operatorname{supp}(M)$, \\
        0 &otherwise 
        \end{cases*} \\
        \text{ with } \operatorname{supp}(M)=\left\{k+\frac{r}{n}: k,r\in \mathbb{Z}\right\}=\ltr \cdot\left\{0,\frac{1}{n},\ldots ,\frac{n-1}{n}\right\}.
\end{gather*} 
 Note that, by $\ltr$-invariance, an endomorphism $\alpha:M\to M$ in $\widetilde{\cT}^{\ltr}(\R)$ is determined by its components $\alpha_x^y$, for $x\in \{0,\frac{1}{n},\ldots ,\frac{n-1}{n}\}$. Let $X=(x_{0},x_{1},\ldots,x_{n-1})\in R^n$ and define
\begin{gather*}
\mathbb{V}_{n}: M_n(R[t,t^{-1}])\rightarrow \widetilde{\cT}^{\langle t\rangle}(\R) \text{ by }\mathbb{V}_{n}(A)=\alpha_{A}: M\rightarrow M \\
\text{where } \alpha_A{(X)}=\sum^{i=m}_{i=-m}A_{i}X^{t}, \text { with } A_{i}X^{t}\in \bigoplus_{j=0}^{j=n-1}M_{i+\frac{j}{n}} = R^n.
\end{gather*}

\begin{defn}\label{size}
For each matrix $A\in M_n(R[t,t^{-1}])$ we define $\operatorname{size}(A)=\size(\mathbb{V}_n(A))$.
\end{defn}

Consider for each $k>0$ the following matrices in $M_n(\R)$: 
\[
D^{k}_{n}=\begin{pmatrix}
k & k-\frac{1}{n} &\ldots & k-\frac{n-2}{n} & k-\frac{n-1}{n}\\
k+\frac{1}{n}  & k &\ldots & k-\frac{n-3}{n} & k-\frac{n-2}{n}\\
\vdots & \vdots & \ddots &\vdots & \vdots\\
k+\frac{n-2}{n} & k+\frac{n-3}{n} &\ldots & k&  k- \frac{1}{n} \\
k+\frac{n-1}{n} & k+\frac{n-2}{n}  &\ldots & k +\frac{1}{n} & k\\
\end{pmatrix}
\]
\[
D^{0}_{n}=\begin{pmatrix}
0 & \frac{1}{n} &\ldots & \frac{n-2}{n} & \frac{n-1}{n}\\
\frac{1}{n}  & 0 &\ldots & \frac{n-3}{n} & \frac{n-2}{n}\\
\vdots & \vdots & \ddots &\vdots & \vdots\\
\frac{n-2}{n} & \frac{n-3}{n} &\ldots & 0&   \frac{1}{n} \\
\frac{n-1}{n} & \frac{n-2}{n}  &\ldots &  \frac{1}{n} & 0\\
\end{pmatrix}
\text{ and }
D^{-k}_{n}= (D^{k}_{n})^{t}
\]
Then, we have that
\[
\size(A_{k})=\max\{d^{k}_{ij}: a_{ij}\neq 0\}\text{ and }
\size{(A)}=\max_{-m\leq k \leq m}{\{ \size{(A_{k})}\}}.
\]

\begin{rem}\label{remgl}
The size of a matrix depends on the dimension $n$, so it is not invariant with respect to stabilization in $\operatorname{GL}_{n}(R[t,t^{-1}])$. Considering $B\in \operatorname{GL}_{n}(R)$ we have
$$\size{\left( \begin{array}{cc}B & 0\\ 0& I_{m} \end{array}\right) }= \frac{n}{n+m}\size{(B)}$$
\end{rem}

\begin{prop}\label{tchiq}

Let $x\in K_{1}(R[t,t^{-1}])$ be the class of $t\in \operatorname{GL}_{1}(R[t,t^{-1}])$ and $\epsilon > 0$. If $\mathbb{U}$ is the functor of Lemma \ref{lem equiv}, then there exists an $\epsilon$-automorphism $\alpha \in \tilde{\cT}^{\langle t \rangle}(\R)$ such that $[\mathbb{U}(\alpha)]= x$.

\end{prop}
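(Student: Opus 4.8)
The plan is to represent $x=[t]$ by the simplest possible ``small'' automorphism: the fractional shift of step $\tfrac1n$ on $\R$, for $n$ large. Fix an odd $n\in\N$ with $\tfrac1n<\epsilon$. Let $M$ be the geometric $R$-module over $\R$ with $M_x=R$ for $x\in\tfrac1n\Z$ and $M_x=0$ otherwise. Since $\tfrac1n\Z=\ltr\cdot\{0,\tfrac1n,\dots,\tfrac{n-1}n\}$ and the finite set $\{0,\tfrac1n,\dots,\tfrac{n-1}n\}$ is compact, $\supp(M)$ is $\ltr$-compact and locally finite, so $M$ is an object of $\TGR$, manifestly $\ltr$-invariant. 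Let $\alpha\colon M\to M$ be the $\ltr$-invariant endomorphism with $\alpha_x^{x+1/n}=\id_R$ for every $x\in\tfrac1n\Z$ and all other components zero. In the notation of Definition \ref{size} this is $\alpha=\mathbb{V}_n(P)$, where $P=A_0+tA_1\in M_n(R[t,t^{-1}])$ with $A_1=e_{1n}$ and $A_0=\sum_{i=2}^n e_{i,i-1}$; equivalently, $P$ is the companion-type matrix with $Pe_j=e_{j+1}$ for $j<n$ and $Pe_n=te_1$.

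First we would check that $\alpha$ is an $\epsilon$-automorphism: its inverse is the shift of step $-\tfrac1n$, i.e.\ $\alpha^{-1}=\mathbb{V}_n(P^{-1})$, and reading off the nonzero entries of $P$ and of $P^{-1}$ against the matrices $D^{0}_{n}$ and $D^{\pm1}_{n}$ recalled above gives $\size(\alpha)=\size(\alpha^{-1})=\tfrac1n<\epsilon$. Next we would compute $\mathbb{U}(\alpha)$: by the construction of $\mathbb{V}_n$ together with the description of $\mathbb{U}$ on a set of representatives of $\ltr\backslash\R$, one has $\mathbb{U}(\alpha)=P$ in $\mathrm{GL}_n(R[t,t^{-1}])$.

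It then remains to identify $[P]$ with $x=[t]$ in $K_1(R[t,t^{-1}])$. For this we would factor $P=\Pi\cdot\diag(1,\dots,1,t)$, where $\Pi$ is the permutation matrix of the cyclic permutation $j\mapsto j+1\ (\mathrm{mod}\ n)$. Writing $\diag(1,\dots,1,t)$ as a product in $\mathrm{GL}_n$ of factors each equal to $\diag(1,\dots,1)$ or to $t\in\mathrm{GL}_1$ stabilized, we get $[\diag(1,\dots,1,t)]=[t]$, hence $[P]=[\Pi]+[t]$. Since $n$ is odd, the $n$-cycle is an even permutation, so $\Pi\in E_n(\Z)$ is a product of elementary matrices and $[\Pi]=0$; therefore $[\mathbb{U}(\alpha)]=[P]=[t]=x$, as required. (If one prefers not to restrict the parity of $n$ — as in the companion statement of the introduction — the same argument applies to $\diag(-1,1,\dots,1)\cdot P$, which still has size $\tfrac1n$ because $\diag(-1,1,\dots,1)$ has size $0$, and whose class equals $[\Pi]+[-1]+[t]=[t]$ since $[\Pi]=[-1]$ generates the order-two image of $K_1(\Z)$ when $n$ is even.)

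The one point requiring care is this last identification. It is tempting to argue that $\alpha^n$ is the shift by $1$ on $M$, i.e.\ $\mathbb{V}_n(tI_n)$, whose class is $n[t]$, which would force $[\alpha]=\tfrac1n[t]$; the resolution is that $P^n=tI_n$ but $P$ itself is \emph{not} diagonal — it differs from $\diag(1,\dots,1,t)$ precisely by the permutation matrix $\Pi$, whose $K_1$-class vanishes for odd $n$. Everything else — well-definedness and $\ltr$-invariance of $\alpha$, the size computation via the explicit $D^{k}_{n}$, and the equality $\mathbb{U}(\mathbb{V}_n(P))=P$ — is routine bookkeeping already set up in Section \ref{sec:squeezing}.
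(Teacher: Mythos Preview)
Your proof is correct and follows essentially the same route as the paper's: both construct the shift-by-$\tfrac1n$ automorphism on the geometric module supported on $\tfrac1n\Z$, check that it and its inverse have size $\tfrac1n$, and identify the resulting companion-type matrix with $[t]$ in $K_1$ via the factorization through a cyclic permutation matrix. The only cosmetic difference is that the paper inserts a sign $(-1)^{n+1}$ into one subdiagonal entry of $\xi$ so that the permutation contribution vanishes for every $n$, whereas you obtain the same effect by taking $n$ odd (and sketch the even case with an extra $\diag(-1,1,\dots,1)$ factor); your write-up is in fact more explicit than the paper's about why $[\mathbb U(\xi)]=[t]$.
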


\begin{proof}
Take $n\in \mathbb{N}$ such that $\frac{1}{n}<\epsilon$. Let $C_{n}\subset{\mathbb{R}}$ be the $\mathbb{Z}$-compact set 
$$
C_{n}= \ltr \cdot \left\{\frac{i}{n}: i\in\{ 0,\ldots, n-1\}\right\}= \left\{m+\frac{i}{n}: m\in \mathbb{Z},  i\in \{ 0,\ldots, n-1\} \right\}
$$
Define for $i\in\{0,\ldots n-1\}$ the following geometric module:
$$
Q^{i}[n]=\{Q^{i}[n]_{y}\}_{y\in \mathbb{R}} \quad\text{with }
Q^{i}[n]_{y}=\left\{\begin{array}{ll}  R & \mbox{ if  $y=m+\frac{i}{n}$ with $m\in\mathbb{Z}$ }\\
0&\mbox{ otherwise.}
  \end{array}\right. 
$$
If we note $Q^{0}[n]=P$, we have that
$$
P=\{P_{y}\}_{y\in \mathbb{R}}, \quad P_{y}=\left\{\begin{array}{ll}  R & \mbox{ if  $y\in \Z$ }\\
0&\mbox{ otherwise }
\end{array}\right. 
$$
and that $Q^{i}[n]$ is a translation of $P$ by $\frac{i}{n}$.
As such, there is an isomorphism $\delta_{i}:P\rightarrow Q^{i}[n]$.

Define $\gamma:P\rightarrow P$ as the automorphism such that $\gamma_w^y:P_{w}\rightarrow P_{y}$ is $\id_{R}$ when $y=w+1$ and the null map otherwise. Note that $\size(\gamma)=1$. Consider now
$\gamma_{ij}: Q^{i}[n]\rightarrow Q^{j}[n]$, given by $\gamma_{ij}
=\delta_{j}\circ \gamma \circ \delta^{-1}_{i}$. Abusing notation
we write $\gamma_{ij}$ as $\gamma$.

Define $Q[n]=\displaystyle\bigoplus^{n-1}_{i=0}Q^{i}[n]$ and $\xi=\{\xi_{ij}\}_{i,j\in \{0,\ldots,n-1\}}:Q[n]\rightarrow Q[n]$ given by
$$
\xi_{ij}:Q^{j}[n] \rightarrow Q^{i}[n]  =\left\{ \begin{array}{ll}
\id & \mbox{ if $j=i-1$, $j\geq 1$}\\
\gamma & \mbox{ if $j=n-1$, $i=0$}\\
(-1)^{n}\id & \mbox{ if $j=0$, $i=1$}\\
0 & \mbox{ otherwise.}\\
\end{array}\right.
$$
Note that $\supp(Q[n])=C_{n}$, $\size{(\xi)}=\frac{1}{n}$ and  $\xi$ is represented by the matrix
$$
\left( \begin{array}{llllll} 
0 & 0 &0&\ldots & 0 &\gamma \\
(-1)^{n+1}\id & 0&0&\ldots & 0 & 0\\
0 & \id &0 &\ldots & 0 & 0\\
\vdots & \vdots &\vdots & \ddots &\vdots & \vdots\\
0 & 0 &0&\ldots & 0 & 0\\
0 & 0  &0 &\ldots & \id & 0\\
\end{array}\right).
$$
Is easy to see that $\xi$ is an automorphism with $\xi^{-1}$  represented by the matrix
$$
\left( \begin{array}{llllll} 
0 & (-1)^{n+1}\id &0&\ldots & 0 &0\\
0 & 0&\id &\ldots & 0 & 0\\
0 & 0 &0 &\ldots & 0 & 0\\
\vdots & \vdots &\vdots & \ddots &\vdots & \vdots\\
0 & 0 &0&\ldots & 0 & \id\\
\gamma^{-1} & 0  &0 &\ldots & 0 & 0 \\
\end{array}\right)
$$
and that $\size(\xi^{-1})=\frac{1}{n}$.
Letting $\nu:Q[n]\rightarrow Q[n]$ be the automorphism represented by the matrix
$$
\left( \begin{array}{lllll} 
\gamma & 0 &\ldots & 0 &0\\
0 & \id &\ldots & 0 & 0\\
\vdots & \vdots & \ddots &\vdots & \vdots\\
0 & 0 &\ldots & \id & 0\\
0 & 0  &\ldots & 0 & \id\\
\end{array}\right), 
$$
we have that
$$
[\mathbb{U}(\nu)]=[\mathbb{U}(\xi)]=x.
$$
\end{proof}

\begin{prop}\label{Achic}
Let $R$ be a regular ring. For all $x\in K_{1}(R[t,t^{-1}])$ and $\epsilon > 0$ there exists $\epsilon$-automorphism $\alpha \in \widetilde{\cT}^{\langle t \rangle}(\R)$ such that $[\mathbb{U}(\alpha)]= x$.
\end{prop}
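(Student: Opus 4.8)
The plan is to reduce $x$, via the Bass--Heller--Swan machinery, to a ``$K_0(R)$-part'' and a ``$K_1(R)$-part'', to realize each of these by small automorphisms separately, and to take a direct sum. Since $R$ is regular, the proof of \cite{bhs}*{Theorem 2} shows that the Bass connecting homomorphism $\partial\colon K_1(R[t,t^{-1}])\to K_0(R)$ is split, with $\partial$ hitting $[P]:=\partial(x)$, that $[\phi_P]:=\psi([P]\oplus 0)$ is a lift of $[P]$ (here $\phi_P=e_Pt+(I_l-e_P)\in\operatorname{GL}_l(R[t,t^{-1}])$ for any idempotent $e_P\in M_l(R)$ with image $P$), and that $x-[\phi_P]$ lies in the image of $K_1(R)\to K_1(R[t,t^{-1}])$ because $NK_1(R)=0$. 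Thus $x=[\phi_P]+[B]$ for some $B\in\operatorname{GL}_k(R)$. Since $\TGR$ is additive, $\mathbb{U}$ is additive, and the size of a direct sum of automorphisms is the maximum of their sizes, it is enough to produce, for the given $\epsilon>0$, an $\epsilon$-automorphism with $\mathbb{U}$-class $[B]$ and one with $\mathbb{U}$-class $[\phi_P]$.

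For the $K_1(R)$-part I would argue exactly as in Section \ref{sec:squeezing}: viewing $B$ as a constant matrix over $R[t,t^{-1}]$, set $\alpha_B=\mathbb{V}_{k+m}(\diag(B,I_m))$. By Remark \ref{remgl}, $\size(\alpha_B)\le\frac{k-1}{k+m}$, and the same estimate applies to $\alpha_B^{-1}=\mathbb{V}_{k+m}(\diag(B^{-1},I_m))$, so $\alpha_B$ is an $\epsilon$-automorphism once $m$ is large; moreover $\mathbb{U}(\alpha_B)=\diag(B,I_m)$ has class $[B]$.

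The heart of the matter is the $K_0(R)$-part, and here I would generalize the automorphism $\xi$ of Proposition \ref{tchiq} by ``weighting it with the idempotent $e_P$''. Choose $n\in\N$ with $\frac1n<\epsilon$, let $M$ be the geometric module over $\R$ with $M_x=R^l$ for $x\in\frac1n\Z$ and $M_x=0$ otherwise, and let $\Theta\colon M\to M$ be the $\ltr$-invariant morphism with $\Theta_w^y=e_P$ if $y=w+\frac1n$, $\Theta_w^y=I_l-e_P$ if $y=w$, and $\Theta_w^y=0$ otherwise. Its inverse is described the same way with $e_P$ placed along $y=w-\frac1n$ instead, so $\size(\Theta)=\size(\Theta^{-1})=\frac1n<\epsilon$; inserting the idempotent symmetrically along the two short diagonals $y=w\pm\frac1n$ is precisely what makes both $\Theta$ and $\Theta^{-1}$ controlled. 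To identify $[\mathbb{U}(\Theta)]$, note that if $s$ denotes the size-$\frac1n$ shift (the $\xi$ of Proposition \ref{tchiq} with its signs removed), which satisfies $s^{\,n}=t$ and $[\mathbb{U}(s)]=[t]$ up to a sign, then $\mathbb{U}(\Theta)=(I_l-e_P)+e_P\cdot s$ is exactly multiplication by $\phi_P(s)=e_Ps+(I_l-e_P)$ on $R[s,s^{-1}]^l$ regarded as a free rank-$n$ module over $R[t,t^{-1}]$; that is, $[\mathbb{U}(\Theta)]$ is the transfer of $\psi_s([P]\oplus 0)$ along $R[t,t^{-1}]\hookrightarrow R[s,s^{-1}]$. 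By the projection formula this equals $[P]\cup\operatorname{tr}([s])=[P]\cup[t]=[\phi_P]$, up to a class $(n-1)([P]\cup[-1])$ in the image of $K_1(R)$ coming from the sign of an $n$-cycle --- the same phenomenon handled by the factors $(-1)^n$ in Proposition \ref{tchiq}. That residual $K_1(R)$-class is then killed by adding one more small constant-matrix automorphism of the kind built in the previous paragraph. The desired $\epsilon$-automorphism is the direct sum $\alpha_B\oplus\Theta\oplus(\text{sign correction})$.

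The main obstacle, as anticipated, is the $K_0(R)$-part: a pure ``cyclic shift'' construction only realizes classes of the form $m[t]+(\text{image of }K_1(R))$, whereas $[\phi_P]$ for a non-stably-free $P$ is genuinely a new class, so the idempotent must enter the construction in an essential way; the delicate points are checking that $\Theta$ \emph{and} $\Theta^{-1}$ are both $\frac1n$-controlled (which forces the symmetric placement above) and computing $[\mathbb{U}(\Theta)]$ up to the harmless sign. Once that is in place, the rest is the Bass--Heller--Swan bookkeeping together with Remark \ref{remgl} and Proposition \ref{tchiq}.
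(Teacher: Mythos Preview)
Your argument is correct and follows the same skeleton as the paper's --- decompose $x$ via Bass--Heller--Swan, realize the $K_1(R)$-piece by stabilizing a constant matrix (Remark \ref{remgl}), and realize the $K_0(R)$-piece by a shift construction --- but the two diverge on that last step. The paper's proof is three lines: it invokes \cite{bhs} to write $x=[t^n]+[M]$ with $n\in\Z$ and $M\in\operatorname{GL}_m(R)$, applies Proposition \ref{tchiq} verbatim to $[t^n]$ and Remark \ref{remgl} to $[M]$, and composes the resulting $\tfrac{\epsilon}{2}$-automorphisms; no idempotent ever appears. Your idempotent-weighted shift $\Theta$ is a genuine extension of Proposition \ref{tchiq} and buys robustness: the decomposition $x=[t^n]+[M]$ only accounts for the subgroup $\Z\cdot[R]\oplus K_1(R)$ of $K_0(R)\oplus K_1(R)$, so as stated the paper's shortcut needs every finitely generated projective $R$-module to be stably free, whereas your construction squeezes $[\phi_P]$ for an arbitrary $[P]\in K_0(R)$. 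Two minor simplifications are available to you: first, choosing $n$ odd makes the sign term $(n-1)[-\id_P]$ vanish, so no separate correction is needed; second, since $e_P$ and $I_l-e_P$ are orthogonal idempotents commuting with the block cyclic shift, $\mathbb{U}(\Theta)$ visibly splits as the identity on the $(I_l-e_P)$-summand direct sum the cyclic $t$-shift on $P[t,t^{-1}]^{\,n}$, and the transfer/projection-formula detour can be replaced by that direct splitting.
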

\begin{proof}
Every $x\in K_{1}(R[t,t^{-1}])$ is represented by $t^{n}MN$ with $n\in \mathbb{Z}$, $M\in \operatorname{GL}_{m}(R)$  and $[N]=[\id]$ (because $R$ is regular), see \cite{bhs}. Then $$x=[t^n]+[M].$$
By Proposition \ref{tchiq} we can consider $[t^{n}]=[\mathbb{U}(\alpha_{n})]$ such that $$\size(\alpha_{n})=\size(\alpha_{n}^{-1})<\frac{\epsilon}{2}$$ and by Remark \ref{remgl} there exists $\alpha_{M}$  such that $[\mathbb{U}(\alpha_{M})]= [M]$ and $$\size(\alpha_{M})=\size(\alpha^{-1}_{M})<\frac{\epsilon}{2}.$$
Then taking $\alpha=\alpha_{n}\circ \alpha_{M}$ we have
$$[\mathbb{U}(\alpha)]=x \qquad \qquad \size{(\alpha)}<\epsilon \qquad \size{(\alpha)^{-1}}<\epsilon.$$
\end{proof}

\begin{cor}\label{assepi} For $R$ a regular ring, the assembly map
\[K_2(\widetilde{\cD}^{\ltr}(\R)) \to K_1(\widetilde{\cT}^{\ltr}(\R))\]
is an epimorphism. 
\end{cor}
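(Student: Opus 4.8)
The plan is to derive the corollary from the vanishing theorem and the ``squeezing for all of $K_1$'' result, using nothing more than the exactness of the $K$-theory long exact sequence. Recall from the germs-at-infinity fibration \eqref{seq} that the Karoubi filtration $\widetilde{\cT}^{\ltr}(\R)\to\widetilde{\cO}^{\ltr}(\R)\to\widetilde{\cD}^{\ltr}(\R)$ produces a long exact sequence whose connecting homomorphism is precisely the assembly map $\partial\colon K_2(\widetilde{\cD}^{\ltr}(\R))\to K_1(\widetilde{\cT}^{\ltr}(\R))$ of the statement. Exactness at $K_1(\widetilde{\cT}^{\ltr}(\R))$ identifies the image of $\partial$ with the kernel of the map $j_*\colon K_1(\widetilde{\cT}^{\ltr}(\R))\to K_1(\widetilde{\cO}^{\ltr}(\R))$ induced by the inclusion $j$ of $\widetilde{\cT}^{\ltr}(\R)$ as a subcategory of $\widetilde{\cO}^{\ltr}(\R)$ (the one appearing in \eqref{seq}, under which $\size$ becomes $\hsi$ by Definition \ref{def sz}). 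Hence it suffices to show that $j_*$ is the zero map.

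So fix $x\in K_1(\widetilde{\cT}^{\ltr}(\R))$. By Lemma \ref{lem equiv} and the discussion following it, $\mathbb{U}$ is an equivalence, so it induces an isomorphism $K_1(\widetilde{\cT}^{\ltr}(\R))\xrightarrow{\cong}K_1(R[t,t^{-1}])$; let $\bar x$ be the image of $x$. Applying Proposition \ref{Achic} with $\epsilon=\tfrac1{30}$ --- the only step where the regularity of $R$ is used --- we obtain a $\tfrac1{30}$-automorphism $\alpha$ in $\widetilde{\cT}^{\ltr}(\R)$ with $[\mathbb{U}(\alpha)]=\bar x$, whence $[\alpha]=x$ because $\mathbb{U}$ is injective on $K_1$. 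Viewing $\alpha$ in $\widetilde{\cO}^{\ltr}(\R)=\OGR$ via $j$, Definition \ref{def sz} gives $\hsi(\alpha)=\size(\alpha)<\tfrac1{30}$ and $\hsi(\alpha^{-1})=\size(\alpha^{-1})<\tfrac1{30}$, so $\alpha$ is a $\tfrac1{30}$-automorphism in $\OGR$. By Theorem \ref{vani}, $[\alpha]=0$ in $K_1(\OGR)$, i.e. $j_*(x)=j_*([\alpha])=0$. Since $x$ was arbitrary, $j_*=0$ and therefore $\partial$ is surjective.

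All the real work sits in Theorem \ref{vani} and Proposition \ref{Achic}, so there is no genuinely hard step; the corollary is essentially a repackaging of those two results. The only points I would be careful about when writing this out are (a) that $\mathbb{U}$, being part of an equivalence of categories, identifies $K_1$-\emph{classes} and not merely the underlying matrices over $R[t,t^{-1}]$, so that $[\mathbb{U}(\alpha)]=\bar x$ really does yield $[\alpha]=x$ in $K_1(\widetilde{\cT}^{\ltr}(\R))$; and (b) that the inclusion $\widetilde{\cT}^{\ltr}(\R)\hookrightarrow\widetilde{\cO}^{\ltr}(\R)$ entering the Karoubi filtration is the size-compatible one, so that a $\tfrac1{30}$-automorphism of the source really does satisfy the hypothesis of Theorem \ref{vani} after being pushed into the target. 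If one prefers, the same conclusion can alternatively be read off from \eqref{cuadradito} together with Bass--Heller--Swan, but the argument above is the one internal to the controlled-topology picture developed here.
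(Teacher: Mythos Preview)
Your proof is correct and follows essentially the same approach as the paper's: both combine Proposition \ref{Achic} with Theorem \ref{vani} to show that every class in $K_1(\widetilde{\cT}^{\ltr}(\R))$ dies in $K_1(\widetilde{\cO}^{\ltr}(\R))$, and then invoke exactness of the long exact sequence to conclude surjectivity of the connecting map. Your write-up is simply more explicit about the intermediate steps --- the role of $\mathbb{U}$ as an equivalence, and the compatibility of $\size$ with $\hsi$ under the inclusion $j$ --- which the paper leaves implicit.
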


\begin{proof} By Proposition \ref{Achic} and Corollary \ref{vani}, the image of every element of $K_1(\widetilde{\cT}^{\langle t \rangle}(\R))$ in $K_1(\widetilde{\cO}^{\langle t \rangle}(\R))$ is trivial, hence by \eqref{ladder} the assembly is surjective.
\end{proof}

\begin{bibdiv}
\begin{biblist}
\bib{b}{book}{
author={Bartels, A.},
title={On Proofs of the Farrell--Jones Conjecture},
booktitle={Topology and Geometric Group Theory},
year={2016},
note={Springer International Publishing},
address={Cham},
pages={1--31}
}

\bib{ba}{article}{
author={Bartels, A.},
title={Squeezing and higher algebraic K-theory},
journal={K-Theory},
volume={28},
pages={19-37},
year={2003}
}

\bib{bb}{article}{
author = {Bartels A.; Bestvina M.},
title = {The Farrell-Jones Conjecture for mapping class groups},
journal = {Inventiones mathematicae},
volume={215},
number={2},
year = {2019},
pages = {651--712}
}

\bib{bfjr}{article}{
author={Bartels A.; Farrell F.T.; Jones L.E.; Reich H.},
title={On the isomorphism conjecture in algebraic $K$-theory},
journal={Topology},
volume={43},
year={2004},
number={1},
pages={157-213}
}

\bib{bl}{article}{
 author = {Bartels A.; Lück W.},
 journal = {Annals of Mathematics},
 number = {2},
 pages = {631--689},
 title = {The Borel Conjecture for hyperbolic and CAT(0)-groups},
 volume = {175},
 year = {2012}
}

\bib{blr}{article}{
author={Bartels A.; L{\"u}ck W.; Reich H.},
title={The $K$-theoretic Farrell-Jones conjecture for hyperbolic groups},
journal={Invent. Math.},
volume={172},
year={2008},
number={1},
pages={29-70}
}

\bib{bhs}{article}{
	Author = {Bass H.; Heller A.; Swan R.},
	Journal = {Publ. Math. I. H. E. S.},
	Pages = {61-79},
	Title = {The Whitehead group of a polynomial extension},
	Volume = {22},
	Year = {1964}
}
	
\bib{ranicki}{article}{
    AUTHOR = {Davis, J. F.; Khan, Q.; Ranicki, A.},
     TITLE = {Algebraic {$K$}-theory over the infinite dihedral group: an algebraic approach},
   JOURNAL = {Algebr. Geom. Topol.},
    VOLUME = {11},
      YEAR = {2011},
    NUMBER = {4},
     PAGES = {2391--2436},
       DOI = {10.2140/agt.2011.11.2391},
       URL = {https://doi.org/10.2140/agt.2011.11.2391},
}

\bib{dl}{article}{
author={Davis J.F.; L\"uck W.},
title={Spaces over a category and assembly maps in isomorphism conjectures in $K$- and $L$- theory},
journal={$K$-theory},
volume={15(3)},
year={1998},
}

\bib{dqr}{article}{
   author={Davis J.F.; Quinn F.; Reich H.},
   title={Algebraic $K$-theory over the infinite dihedral group: a controlled topology approach},
   journal={J. Topol.},
   volume={4},
   date={2011},
   number={3},
   pages={505--528},
   issn={1753-8416},
   review={\MR{2832565}},
   doi={10.1112/jtopol/jtr009},
}

\bib{fj}{article}{
author={Farrell F.T.; Jones L.E.},
title={Isomorphism conjectures in algebraic $K$-theory},
journal={J. Amer. Math. Soc.},
volume={6(2)},
pages={249--297},
year={1993}
}

\bib{libroluck}{article}{
	author={Lück W.},
	title={Isomorphism Conjectures in K- and L-theory},
	eprint={https://www.him.uni-bonn.de/lueck/data/ic.pdf},
	note={Ongoing book project}
}

\bib{lr}{article}{
   author={L\"{u}ck W.; Reich H.},
   title={The Baum-Connes and the Farrell-Jones conjectures in $K$- and
   $L$-theory},
   conference={
      title={Handbook of $K$-theory. Vol. 1, 2},
   },
   book={
      publisher={Springer, Berlin},
   },
   date={2005},
   pages={703--842},
   review={\MR{2181833}},
}

\bib{ped}{incollection}{
author = {Pedersen E.K.},
title = {Controlled algebraic K-theory, a survey},
booktitle = {In Geometry and topology: Aarhus},
year = {1998},
pages = {351--368}
}

\bib{rv}{incollection}{
author={Reich H.; Varisco M.},
title={Algebraic $K$-theory, assembly maps, controlled algebra and trace methods},
booktitle={In Space-Time-Matter. Analytic and Geometric Structures},
note={De Gruyter},
date={2018},
pages={1-50}
}

\bib{w}{article}{
author = {Wegner C.},
title = {The Farrell-Jones Conjecture for virtually solvable groups},
journal = {Journal of Topology},
pages = {975--1016},
volume = {8},
number={4},
year = {2015},
}

\end{biblist}
\end{bibdiv}

\end{document}